\newtheorem{lemma}{Lemma}
\newtheorem{theorem}{Theorem} 
\newtheorem{definition}{Definition}
\newcommand{\floor}[1]{\left\lfloor{#1}\right\rfloor}
\newcommand{\ceil}[1]{\left\lceil{#1}\right\rceil}
\renewcommand{\quote}[1]{\textquotedblleft #1\textquotedblright}
\begin{document}
	\title{Absolutely avoidable order-size pairs for induced subgraphs}
	\date{\vspace{-5ex}}
	\author{
		Maria Axenovich
		\thanks{
			Karlsruhe Institute of Technology, Karlsruhe, Germany;
			email:
			\mbox{\texttt{maria.aksenovich@kit.edu}}.
		}     
		\and
		Lea Weber
		\thanks{
			Karlsruhe Institute of Technology, Karlsruhe, Germany;
			email: \mbox{\texttt{lea.weber@kit.edu}}.
		}
	}
	
	\maketitle
	
	\begin{abstract}
		\setlength{\parskip}{\medskipamount}
		\setlength{\parindent}{0pt}
		\noindent
	
	We call a pair $(m,f)$ of integers, $m\geq 1$, $0\leq f \leq \binom{m}{2}$, \emph{absolutely avoidable} if there is $n_0$ such that for any pair of integers $(n,e)$ with $n>n_0$ and  $0\leq e\leq \binom{n}{2}$ there is a graph on $n$ vertices and $e$ edges that contains no induced subgraph on $m$ vertices and $f$ edges. Some pairs are clearly not absolutely avoidable, for example $(m,0)$ is not absolutely avoidable since any sufficiently sparse graph on at least $m$ vertices contains independent sets on $m$ vertices. Here we show that there are infinitely many absolutely avoidable pairs. We give a specific infinite set $M$ such that for any $m\in M$, the pair $(m, \binom{m}{2}/2)$ is absolutely avoidable. 
	In addition,  among other results, we show that for any monotone integer function $q(m)$,  $|q(m)|=O(m)$,  there are infinitely many values of $m$ such that the pair $(m, \binom{m}{2}/2 +q(m))$ is absolutely avoidable. 
	\end{abstract}

	\section{Introduction}
One of the central topics of graph theory deals with properties of classes of graphs that contain no subgraph isomorphic to some given fixed graph,  see for example Bollob\'as \cite{B}.
Similarly, graphs with forbidden induced subgraphs have been investigated from several different angles -- enumerative, structural, algorithmic, and more.  \\

Erd\H{o}s, Füredi, Rothschild and S\'os  \cite{EFRS} initiated  a study of a seemingly simpler class of graphs that do not forbid a specific induced subgraph, but rather forbid any induced subgraph on a given number $m$ of vertices and number $f$ of edges.  Following their notation  we say  a graph $G$ \emph{arrows} a pair of non-negative integers $(m,f)$ and write $G \to (m,f)$ if $G$ has an induced subgraph on $m$ vertices and $f$ edges.  We say that  a pair $(n,e)$  of non-negative integers \emph{arrows} the pair $(m,f)$,  and write $(n,e) \to (m,f)$, if  for any graph $G$ on $n$ vertices and $e$ edges, $G \to (m,f)$.  \\
	 As an example, if $t_{m-1}(n)$ denotes the number of edges in the balanced complete $(m-1)$-partite graph on $n$ vertices, then by Tur\'an's theorem \cite{T} we know that any graph on $n$ vertices with more than $t_{m-1}(n)$ edges contains $K_{m}$,  a complete subgraph on $m$ vertices. Equivalently stated, we have $(n,e) \to (m,\binom m2)$  if $e > t_{m-1}(n)$. \\
	 
	 For a fixed pair $(m,f)$ let $$S_n(m,f) = \{e : (n,e) \to (m,f)\}\quad \mbox{ and }\quad \sigma(m,f) =\left. \limsup\limits_{n \to \infty} {|S_n(m,f)|} /{\binom n2}\right.. $$
	 In \cite{EFRS} the authors considered $\sigma(m,f)$ for different choices of $(m,f)$. One of their main results is 
	 \begin{theorem}\cite{EFRS}
	 	If $(m,f) \not\in \{(2,0), (2,1), (4,3), (5,4), (5,6)\}$, then $\sigma(m,f) \le \frac23$; otherwise $\sigma(m,f) =1$.
	 \end{theorem}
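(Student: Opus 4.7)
The plan is to separate the proof into (i) verifying $\sigma(m,f)=1$ for the five exceptional pairs and (ii) proving $\sigma(m,f)\leq \frac{2}{3}$ for all other pairs.

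For $(2,0)$ the only $n$-vertex graph that does not arrow $(2,0)$ is $K_n$, and for $(2,1)$ it is $\overline{K_n}$; in both cases $|S_n|=\binom{n}{2}$, so $\sigma=1$. For $(4,3)$, $(5,4)$ and $(5,6)$ the argument becomes structural. The three $4$-vertex graphs with exactly three edges are $P_4$, $K_{1,3}$ and $K_3\cup K_1$; avoiding $P_4$ as an induced subgraph already forces the graph to be a cograph, and additionally forbidding the other two patterns restricts the cotree so drastically that only $o(n^2)$ edge counts are realized on $n$ vertices. An analogous case analysis handles $(5,4)$, and $(5,6)$ follows by the complementation identity $(n,e)\to(m,f)\Leftrightarrow (n,\binom{n}{2}-e)\to(m,\binom{m}{2}-f)$.

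For $(m,f)$ outside the exceptional list, my goal is to produce, for each large $n$, a family of graphs on $n$ vertices whose edge-counts cover at least $(\tfrac{1}{3}+o(1))\binom{n}{2}$ values of $e$, each failing to arrow $(m,f)$. The main construction I propose is a disjoint union $K_a\cup H$, where $H$ is a small template on $O(m)$ vertices. Every induced $m$-subset of $K_a \cup H$ selects $k$ vertices from $K_a$ and $m-k$ from $V(H)$, with induced edge count of the form $\binom{k}{2}+e'$ where $e'$ is the edge count of some induced subgraph of $H$; the set of such pairs $(k,e')$ is bounded in size. Choosing $H$ so that $f$ is excluded from this finite set, then varying $a$ and the number of internal edges of $H$, traces out a long interval of edge counts near one endpoint of $[0,\binom{n}{2}]$. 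Applying the complementation identity yields a second interval near the other endpoint, and together they should cover at least $\tfrac{1}{3}\binom{n}{2}$ values.

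The main obstacle, in my view, is the structural analysis for $(4,3)$, $(5,4)$ and $(5,6)$: pinpointing the graphs that avoid each of these small patterns and showing that their edge-counts form only an $o(n^2)$-sized set requires careful casework, in particular to control edge counts realized by disjoint unions of cliques. A secondary difficulty is verifying that for every admissible $(m,f)$ a suitable template $H$ exists and that the two intervals so produced are long enough; I expect that this is precisely where the exceptional cases are forced, because for those pairs no constant-sized template can avoid the obstruction. If the $K_a\cup H$ construction fails for some $(m,f)$, one would augment with complete bipartite graphs $K_{a,b}$, Turán graphs, or random constructions and combine the realized edge counts to still obtain the $\tfrac{1}{3}$-fraction.
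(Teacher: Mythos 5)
This theorem is not proved in the paper under review: it is quoted directly from Erd\H{o}s, F\"uredi, Rothschild, and S\'os \cite{EFRS}, so there is no internal proof to compare your argument against. Evaluating your proposal on its own terms: the treatment of $(2,0)$ and $(2,1)$ is correct, and the structural sketch for $(4,3)$, $(5,4)$, $(5,6)$ points in the right direction. For $(4,3)$, a graph not arrowing $(4,3)$ is $\{P_4,\,K_{1,3},\,K_3\cup K_1\}$-free, the pair $(4,3)$ is self-complementary, and one can then show that either $G$ or $\overline G$ splits into $O(1)$-sized components, so the realizable edge counts form an $o(n^2)$ set; this can be made rigorous, though you would need to carry out the casework you allude to.

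The genuine gap is in the $\sigma(m,f)\le\frac23$ part. The construction $K_a\cup H$ with $|V(H)|=O(m)$ \emph{bounded} cannot produce a positive-density set of edge counts. With $a=n-O(m)$, the count $\binom a2+|E(H)|$ jumps by $\approx a=\Theta(n)$ when $a$ decreases by one, while varying $|E(H)|$ over its at most $O(m^2)=O(1)$ possible values fills only a constant number of positions inside each such gap. The resulting set of achievable $e$-values has density $O(m^2/n)\to 0$, not $\ge\tfrac13$, and taking complements yields a second density-zero window rather than a second long interval; together they do not cover $\tfrac13\binom n2$ values. To obtain an interval of positive density, the second summand must grow linearly with $n$ (a sparse graph on $\Theta(n)$ vertices with a controllable number of edges), but then an induced $m$-set is no longer confined to a finite list of patterns from which $f$ can simply be excluded, and the argument must instead rely on structural constraints on which $(m,f)$ can arise as a clique plus a piece of the sparse graph. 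Your proposal does not contain this step, so as written it establishes only that a density-zero set of $e$-values fails to arrow $(m,f)$, which is far from the claimed bound. The remaining concerns you raise (existence of a suitable template, pinning down exactly which five pairs are exceptional) are real but secondary to this density problem.
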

 	He, Ma, and Zhao  \cite{HMZ} improved the upper bound $2/3$ to $ 1/2$ and showed that there are infinitely many pairs for which the equality $\sigma(m,f) = \frac12$ holds. \\
 	
 	In \cite{EFRS} the authors also gave a construction demonstrating that \quote{most of the} $\sigma(m,f)$ are $0$, by showing that for large $n$ almost all pairs $(n,e)$ can be realized as  the  vertex disjoint union of a clique and a high-girth graph, and that for fixed $m$ most pairs $(m,f)$ cannot be realized as the vertex disjoint union of a clique and a forest.
For some other results concerning sizes of induced subgraphs, see for example Alon and Kostochka \cite{AK}, Alon, Balogh, Kostochka, and Samotij  \cite {ABKS},  Alon,  Krivelevich, and Sudakov \cite{AKS}, Axenovich and Balogh \cite{AB}, Bukh and Sudakov \cite{BS}, Kwan and Sudakov \cite{KS1, KS2} and Narayanan, Sahasrabudhe, and Tomon  \cite{NST}.\\	
 	
 	In this paper we investigate the existence of pairs $(m,f)$ for which we not only have $\sigma(m,f) = 0$, but the stronger property $S_n(m,f) = \emptyset$ for large $n$.

\begin{definition} A pair $(m,f)$ is  \textbf{absolutely avoidable} if there is $n_0$ such that for each $n>n_0$ and for any $e\in \{0, \ldots, \binom{n}{2}\}$, $(n,e) \not\to  (m,f)$.
\end{definition}

Our results show that  there are infinitely many absolutely avoidable pairs.   Our first result gives an explicit construction of infinitely many absolutely avoidable pairs $(m, \binom{m}{2}/2)$. The second one provides an existence result of infinitely many absolutely avoidable pairs $(m, f)$, where $f$ is  \textquotedblleft close\textquotedblright\ to $\binom{m}{2}/2$.
Finally, the last result shows that for every sufficiently large  $m$ congruent to $0$ or $1$ modulo  $4$,  at least one of the pairs 
$(m, \binom{m}{2}/2)$ and $(m, \binom{m}{2}/2-6m)$ is absolutely avoidable.

For the first result  we need to define the following set $M$ of integers. Let 
$$M= \left\{ \frac12 \left( \begin{pmatrix}1 & 0\end{pmatrix}\cdot
									\begin{pmatrix} 3 & 4 \\ 2 & 3 \end{pmatrix}^s\cdot\begin{pmatrix}
										3 \\ 1 \end{pmatrix}+ 5 \right): s\in \mathbb{N} , s\ge 2
									\right\}.$$
In particular, we have $M = \{40, 221,1276\ldots\}$.	\\

\begin{theorem}\label{main}
For any  $m \in M$,   $f = \binom m2 /2$ is an integer and the pair $(m, f)$ is absolutely avoidable. 
\end{theorem}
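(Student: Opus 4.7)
The plan is to prove \cref{main} via two coordinated steps: a Diophantine lemma and a case-by-case construction of witness graphs covering every pair $(n, e)$.

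Since $A = \begin{pmatrix} 3 & 4 \\ 2 & 3 \end{pmatrix}$ has determinant one, it preserves the indefinite form $X^2 - 2Y^2$; as $3^2 - 2 \cdot 1^2 = 7$, every iterate satisfies $x_s^2 - 2y_s^2 = 7$, so $x_s$ is odd and $m = (x_s + 5)/2$ is an integer. A direct check of the orbit of $(3,1)$ modulo $8$ reveals $x_s \equiv 3, 5 \pmod 8$, yielding $m \equiv 0$ or $1 \pmod 4$ and hence $f = m(m-1)/4 \in \mathbb{Z}$. The central algebraic claim is the \emph{Diophantine lemma}: for every $m \in M$, the equation $\binom{i}{2} + \binom{j}{2} = \binom{m}{2}/2$ has no solution in non-negative integers $i, j$. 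Setting $s = i + j$ and $u = |i - j|$, this equation simplifies to $u^2 + (s-1)^2 = m^2 - m + 1$, and (using $u, s-1 < m$ automatically and swapping if needed) it has a solution exactly when $m^2 - m + 1$ is a sum of two integer squares. By the two-squares theorem, it suffices to exhibit for every $m \in M$ a prime $p \equiv 3 \pmod 4$ dividing $m^2 - m + 1$ to an odd multiplicity. I would argue by descent on $s$: the identity $4(m^2 - m + 1) = (x_s + 4)^2 + 3$ combined with $x_s^2 - 2y_s^2 = 7$ relates the factorizations across consecutive levels of the recursion, with the base cases ($s = 2, 3$) verified directly via $1561 = 7 \cdot 223$ and $48621 = 3 \cdot 19 \cdot 853$.

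For the witness graphs, for each $(n, e)$ with $n$ sufficiently large I would use one of three constructions. (a) If $e \le n(m-3)/4$, take any graph on $n$ vertices with $e$ edges and maximum degree at most $\lfloor (m-3)/2 \rfloor$; any induced $m$-vertex subgraph then has at most $m(m-3)/4 < f$ edges and so avoids $(m, f)$. (b) If $e \ge \binom{n}{2} - n(m-3)/4$, use the complement of such a graph. (c) For $e$ in the middle range, take $G = K_a \cup K_b \cup \bar K_{n-a-b}$ (or its complement) with $\binom{a}{2} + \binom{b}{2}$ equal to $e$ (respectively $\binom{n}{2} - e$); every induced $m$-vertex subgraph of $G$ has edge count of the form $\binom{i}{2} + \binom{j}{2}$ with $i + j \le m$, which by the Diophantine lemma is never equal to $f$.

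The main obstacle is twofold. First, the Diophantine lemma itself is the algebraic crux: while verifiable by hand for the first few $m \in M$, its uniform proof requires careful use of the Pell-orbit recursion to track prime factorizations of $m^2 - m + 1$ across all $s$. Second, the middle-range coverage requires a number-theoretic input, namely that every integer $e$ in the middle range is a sum of two triangular numbers bounded by $n$, possibly after replacing $e$ by $\binom{n}{2} - e$. Since integers not expressible as a sum of two triangular numbers have positive density, some values of $e$ will not be directly realizable by a 2-clique construction and will need to be handled by supplementing with a small, low-degree graph $H$ whose edges adjust the total count without introducing an induced $(m, f)$-subgraph; justifying this will likely require a mild strengthening of the Diophantine lemma that yields a gap of achievable $\binom{i}{2} + \binom{j}{2}$ values around $f$.
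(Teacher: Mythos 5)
Your proposal takes a genuinely different route from the paper, and it has two real gaps, one of which is structural rather than merely technical.

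The paper's argument hinges on a different structural characterization: it shows (Lemma \ref{no-clique-forest-general}, via the quantities $L_q, R_q$) that $(m,\binom m2/2)$ cannot be realized as the vertex-disjoint union of \emph{a single clique and a forest}, and then invokes the Erd\H{o}s--F\"uredi--Rothschild--S\'os observation (Lemma \ref{yes-clique-forest}) that every pair $(n,e)$ with $e\le c\binom n2$ is realizable as a clique plus a graph of girth $>m$. The high-girth part is crucial: it can have \emph{any} edge count up to $(n-k)^{1+1/(2p)}$, which gives the flexibility to hit every $e$ in the range, while every $m$-vertex induced subgraph of the whole thing is still a clique plus a forest. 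Your construction $K_a\cup K_b\cup\bar K_{n-a-b}$ replaces the high-girth graph by a second clique, and this breaks the range coverage: you need $e$ (or $\binom n2-e$) to be a sum $\binom a2+\binom b2$ of two triangular numbers, and that set has density zero (Landau-type), so your Case (c) leaves most middle-range values of $e$ unrealized. You identify this yourself, but the proposed patch --- tacking on a small low-degree graph $H$ and hoping for a ``gap'' in achievable $\binom i2+\binom j2$ values near $f$ --- is not worked out, and it is not clear it can be: once $H$ is present, an $m$-vertex induced subgraph can mix vertices from $K_a$, $K_b$, \emph{and} $H$, and its edge count is no longer of the form $\binom i2+\binom j2$, so your Diophantine lemma no longer applies to it. In effect you would be forced back to something like the paper's ``clique plus forest'' condition anyway.

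Separately, your Diophantine lemma itself (that $m^2-m+1$ is never a sum of two squares when $m\in M$, equivalently that it always carries some prime $p\equiv 3\pmod 4$ to odd multiplicity) is an interesting and, empirically, apparently true claim, but the descent sketch via $4(m^2-m+1)=(x_s+4)^2+3$ and the Pell relation $x_s^2-2y_s^2=7$ is not a proof: neither identity controls the multiplicities of primes $\equiv 3\pmod 4$ in $m_s^2-m_s+1$ as $s$ increases, and tracking factorizations along a Pell orbit is genuinely hard. Note also that even if you proved it, this lemma is logically incomparable to what the paper needs: the paper needs that $\binom x2+e'\ne\binom m2/2$ for all $0\le e'<m-x$ (clique plus forest), whereas you rule out $\binom i2+\binom j2$ (clique plus clique); neither condition implies the other. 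The paper sidesteps all of this because the Pell equation $x^2-2y^2=7$ directly forces $\sqrt{2m^2-10m+9}$ to be an odd integer, hence $\{y_0(m)\}=\tfrac12$, hence $L_0(m)>R_0(m)$ by a short floor-function computation --- no prime factorization claims are ever needed. Your cases (a) and (b) (bounded-degree extremes) are fine but are not where the difficulty lies.
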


\vskip 0.1 cm
\begin{theorem}\label{thm:fixed_constants}
	For any monotone integer valued function  $q(m) $ such that $|q(m)|= O(m)$, there are infinitely many values of $m$, such that the pair $(m, \binom{m}{2}/2 -q(m))$  is absolutely avoidable.
	
	Moreover, there are infinitely many values of $m$,  such that for any integer $f'\in (\binom{m}{2}/2-0.175m,\binom{m}{2}/2+0.175m)$ the pair $(m,f')$ is absolutely avoidable.
\end{theorem}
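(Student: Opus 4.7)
The plan is to recycle the construction from Theorem~\ref{main}: build $G$ as either $K_a\sqcup H$ or the complement thereof, where $H$ is a graph of girth greater than $m$. Any induced $m$-vertex subgraph of such a $G$ consists of some $k$ vertices drawn from $K_a$ together with an induced subgraph on $m-k$ vertices of $H$, and the girth hypothesis forces the latter to be a forest with at most $m-k-1$ edges. Hence every attainable induced edge count lies in some \emph{block} $B_k=[\binom{k}{2},\binom{k}{2}+m-k-1]$, and so every integer $f$ lying in a \emph{gap}
\[
G_k=\bigl[\tbinom{k}{2}+m-k,\ \tbinom{k+1}{2}-1\bigr]
\]
is avoided. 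The width of $G_k$ equals $2k-m+1$, positive once $k>(m-1)/2$. I would take as a black box (from the proof of Theorem~\ref{main}) the realization half of the argument: that \emph{every} pair $(n,e)$ with $n$ large is actually achieved by such a $G$ or its complement with the correct edge count.

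For the moreover statement, note that $\binom{k}{2}\le\binom{m}{2}/2<\binom{k+1}{2}$ forces $k\sim m/\sqrt{2}$, so the gap nearest the centre has width $(\sqrt{2}-1)m+O(1)\approx 0.414\,m$, which comfortably exceeds the target window width $0.35\,m$. Setting $k=\lfloor m/\sqrt{2}\rfloor$ and $\eta=\{m/\sqrt{2}\}$ for the fractional part, a short Taylor expansion of $\binom{k}{2}$ and $\binom{k+1}{2}$ around $m/\sqrt{2}$ turns the inclusion $(\binom{m}{2}/2-0.175m,\binom{m}{2}/2+0.175m)\subseteq G_k$ into a linear constraint $\eta\in[\alpha,\beta]$ with $\alpha\approx 0.515$ and $\beta\approx 0.606$; the positive slack $\beta-\alpha\approx 0.091$ is essentially the leftover margin $\approx 0.064m$ divided by the sensitivity coefficient $1/\sqrt{2}$. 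Since $1/\sqrt{2}$ is irrational, Weyl's equidistribution theorem implies that $\{m/\sqrt{2}\}$ visits $[\alpha,\beta]$ for infinitely many~$m$.

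The first part of the theorem is handled by the same strategy. Let $k_m$ be the unique integer with $\binom{k_m}{2}\le\binom{m}{2}/2-q(m)<\binom{k_m+1}{2}$ and set $r(m):=\binom{m}{2}/2-q(m)-\binom{k_m}{2}$. Then $\binom{m}{2}/2-q(m)\in G_{k_m}$ precisely when $r(m)\in[m-k_m,\,k_m)$, a window of normalized length $(2k_m-m)/k_m\to 2-\sqrt{2}>0$. A short calculation shows that incrementing $m$ by $1$ shifts the normalized residue $r(m)/k_m$ modulo $1$ by $1/\sqrt{2}+o(1)$: monotonicity of $q$ and $|q(m)|=O(m)$ force $\Delta q(m)$ to be a nonnegative (or nonpositive) integer of $O(1)$ average, so the per-step shifts are a bounded perturbation of an irrational rotation. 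Weyl's criterion again produces infinitely many $m$ with $r(m)\in[m-k_m,k_m)$.

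The main obstacle I expect is not in either of these arithmetic gap arguments, but in the realization step imported from Theorem~\ref{main}: for every $(n,e)$ with $n$ sufficiently large one must actually produce $K_a\sqcup H$ of the correct edge count with $H$ of girth greater than $m$. Hitting edge counts in the middle range of $e$ requires $H$ to be a high-girth graph with a tunable number of edges, which typically appeals to Erd\H{o}s-type probabilistic constructions giving $\mathrm{ex}(N,\text{girth}>m)=\Omega(N^{1+1/\lfloor m/2\rfloor})$, combined with the complementary construction to cover edge counts close to $\binom{n}{2}$.
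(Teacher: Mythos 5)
Your underlying strategy --- classify which pairs $(m,f)$ are realizable as a clique together with a forest, and combine this with the Erd\H{o}s--F\"uredi--Rothschild--S\'os realization lemma and complementation --- is the same as the paper's, and your ``gap'' intervals $G_k=[\binom{k}{2}+m-k,\ \binom{k+1}{2}-1]$ are an equivalent reformulation of the paper's inequality $L_q(m)>R_q(m)$ from Lemmas~\ref{LR} and \ref{no-clique-forest-general}. The realization step you flag at the end as the ``main obstacle'' is actually not one: it is exactly Lemma~\ref{yes-clique-forest}, which is a short standard argument.

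There is, however, a genuine gap in your argument for the first part. Membership of $f$ in some $G_k$ only rules out $G\to(m,f)$ when $G$ itself is a clique plus a high-girth graph, i.e.\ it only covers the case $e\le \binom n2/2$. For $e>\binom n2/2$ one must take $\overline G$ to be of that form, and then $G\to(m,f)$ if and only if $\overline G\to(m,\binom m2-f)$; so one also needs $\binom m2-f$ to lie in a gap. This is exactly why Lemma~\ref{lem:inequality-gives-avoidable} requires \emph{both} $L_q(m)>R_q(m)$ and $L_{-q}(m)>R_{-q}(m)$, and why the paper's proof of the first part splits into two cases according to $\{2\sqrt 2\, a\}$ so as to place $\{y_q(4m)\}$ and $\{y_{-q}(4m)\}$ in the right ranges simultaneously. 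Your ``moreover'' part survives this objection only because the target interval is symmetric about $\binom m2/2$, so $f'$ and $\binom m2-f'$ land in the same gap; but in the first part with $q\ne 0$, $f_-=\binom m2/2-q$ and $f_+=\binom m2/2+q$ are separated by $2q$ and generally fall in different gaps (or one in a gap, one in a block). For a toy illustration of the danger, take $m=8$, $q=1$: $f_-=13$ lies in the gap $[13,14]$ between $B_5=[10,12]$ and $B_6=[15,16]$, but $f_+=15=\binom 62$ is realized as $K_6$ plus two isolated vertices, so your one-sided check is satisfied yet the construction does not avoid $(8,13)$.

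A second, smaller issue: the equidistribution step in the first part appeals to the residue being ``a bounded perturbation of an irrational rotation,'' but boundedness of a perturbation does not preserve uniform distribution mod $1$. What is actually needed, and what the paper uses (Lemma~\ref{fact}(b)), is the sharper fact that if $x_n - y_n$ \emph{converges} to a constant and $(x_n)$ is u.d.\ mod $1$, then $(y_n)$ is too; this is where the limit $a=\lim q(m)/m$ (extracted via monotonicity and $|q(m)|=O(m)$) enters. You should also restrict to $m\equiv 0,1\pmod 4$ so that $\binom m2/2$ is an integer, as the paper does by working with $4m$.
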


\vskip 0.1cm

\begin{theorem}\label{thm:all_m_are_bad}
	For any $m \ge 740$ with $m \equiv 0,1 \pmod 4$ either $(m, \binom m2 /2)$ or $(m, \binom m2/2 - 6m)$ is absolutely avoidable.
\end{theorem}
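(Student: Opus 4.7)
The plan is to construct, for every $(n,e)$ with $n$ sufficiently large, an explicit graph on $n$ vertices and $e$ edges whose induced $m$-vertex subgraphs avoid either $f_1 := \binom{m}{2}/2$ or $f_2 := f_1 - 6m$. The construction is $G = K_a \sqcup H$, where $a := \max\{k : \binom{k}{2} \le e\}$ and $H$ is a graph on $n-a$ vertices with $e - \binom{a}{2}$ edges and girth exceeding $m$; such an $H$ exists for all large $n$ (e.g.\ a long path padded with isolated vertices), since $e - \binom{a}{2} < a \le n$. An induced $m$-vertex subgraph of $G$ consists of $k$ vertices from $K_a$ and $m-k$ vertices from $H$, contributing $\binom{k}{2} + r$ edges with $0 \le r \le m-k-1$ (the induced $H$-subgraph being a forest by the girth assumption). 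With a suitable choice of $H$ every value of $r$ in this range is realised, so the set of edge counts of induced $m$-subgraphs of $G$ is
\[
A_m \;:=\; \bigcup_{k=0}^{m-1}\bigl[\tbinom{k}{2},\; \tbinom{k}{2}+m-k-1\bigr] \;\cup\; \bigl\{\tbinom{m}{2}\bigr\}.
\]
For boundary regimes where $e$ is close to $\binom{n}{2}$ we pass to the complement $\overline{G}$, whose achievable set is $\binom{m}{2}-A_m$. Hence $(m, f)$ is absolutely avoidable whenever $f \notin A_m$.

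It therefore suffices to prove that for every $m \ge 740$ with $m \equiv 0, 1 \pmod 4$, at least one of $f_1, f_2$ lies outside $A_m$. For $i \in \{1,2\}$ set $k_i^\ast := \max\{k \le m : \binom{k}{2} \le f_i\}$ and $\rho_i := f_i - \binom{k_i^\ast}{2} \in \{0,\ldots, k_i^\ast-1\}$, so that $f_i \in A_m$ if and only if $\rho_i \le m - k_i^\ast - 1$. The congruence condition guarantees $f_1, f_2 \in \mathbb{Z}$, and the closed form $k_1^\ast = \lfloor(1+\sqrt{1+2m(m-1)})/2\rfloor$ gives $k_1^\ast \approx m/\sqrt 2$. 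If $\rho_1 > m - k_1^\ast - 1$ then $(m, f_1)$ is absolutely avoidable; otherwise I will establish $\rho_2 > m - k_2^\ast - 1$, yielding the absolute avoidability of $(m, f_2)$.

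In this second case, the jump $j := k_1^\ast - k_2^\ast$ must equal $9$: since $6\sqrt{2} \approx 8.485$, only $j \in \{8, 9\}$ are plausible, and $j = 8$ would force $\rho_1 \ge 6m - 8 k_1^\ast + 36 \approx 0.343\,m$, contradicting $\rho_1 \le m - k_1^\ast - 1 \approx 0.293\,m$. With $j = 9$, the identity $\binom{k_1^\ast}{2} - \binom{k_2^\ast}{2} = 9k_1^\ast - 45$ gives $\rho_2 = \rho_1 + (9k_1^\ast - 45 - 6m)$, and the target inequality $\rho_2 > m - k_2^\ast - 1$ simplifies to $\rho_1 + 10 k_1^\ast > 7m + 53$. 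Substituting the closed form for $k_1^\ast$ and the resulting expression $\rho_1 = \tfrac{1}{2}\bigl(m(m-1)/2 - k_1^\ast(k_1^\ast-1)\bigr)$, a short computation reduces this to $\sqrt{1+2m(m-1)} > 1.4\,m + 9.6$ (up to integer rounding of $k_1^\ast$), which holds for every $m \ge 740$ with the prescribed congruence.

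The main obstacle is this final arithmetic check: the margin is $O(1)$ in $m$, and the dichotomy genuinely fails for certain smaller $m$ --- most notably $m = 697$, a solution to the Pell-like equation $(2m-1)^2 - 2(2k-1)^2 = -1$, where $f_1$ is itself a triangular number ($\rho_1 = 0$) and both $f_1, f_2$ lie in $A_m$. The bound $m \ge 740$ is thus chosen to safely avoid all such exceptional values (and to accommodate the integer rounding of $k_1^\ast$ in the boundary range). A secondary technicality is dealing with $(n, e)$ where $e$ is close to $\binom{n}{2}$: here the complement construction is invoked and the parallel condition $f_1 + 6m = \binom{m}{2} - f_2 \notin A_m$ takes over, a mirror-image computation reaching the same conclusion in the same regime.
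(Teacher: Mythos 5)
Your proposal follows essentially the same route as the paper: realize every $(n,e)$ (up to complementation) as a clique plus a high-girth graph, reduce the problem to whether $(m,f)$ is a disjoint union of a clique and a forest, and then do arithmetic on the ``split point'' between clique and forest. Your $k_i^\ast$ is exactly the paper's $R_q(m)$, your $\rho_i\le m-k_i^\ast-1$ criterion is exactly the paper's $L_q(m)\le R_q(m)$, and tracking the ``jump'' $j$ between $k_1^\ast$ and $k_2^\ast$ (with $6\sqrt 2\approx 8.485$ forcing $j\in\{8,9\}$) is precisely what the paper's analysis of $\{y_{\pm 6m}(m)\}$ via the shift $\pm 6\sqrt 2$ is doing in the fractional-part language. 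The reduction to $\rho_1+10k_1^\ast>7m+53$, worst case at $\rho_1=0$ (the Pell solutions, $m=697$ being the last one below the threshold), matches the role the case $\{y_0(m)\}\in[d_0(m),\tfrac12)$ plays in the paper. So: same idea, different bookkeeping.

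Three points deserve care, all fixable. First, ``such an $H$ exists\ldots e.g.\ a long path'' is not quite right: $e-\binom a2$ can exceed $n-a$ (indeed for $e\approx\binom n2/2$ one has $a\approx n/\sqrt2>n-a$), so you really need a graph of girth $>m$ with superlinear edge count, as in the paper's Lemma~\ref{yes-clique-forest}; a forest does not suffice. Second, ``$(m,f)$ is absolutely avoidable whenever $f\notin A_m$'' is incomplete --- you need $f\notin A_m$ \emph{and} $\binom m2-f\notin A_m$; you do acknowledge this later, but note that the ``mirror-image'' verification for $f_1+6m$ is \emph{not} a literal mirror: the hypothesis $\rho_1\le m-k_1^\ast-1$ together with $10k_1^\ast>7m-37$ (which always holds) forces the jump on the $+6m$ side to be $8$, not $9$, and then the required inequality $\rho_3>m-k_3^\ast-1$ becomes $\rho_1>7k_1^\ast-5m+19$, which is trivially true since the right side is negative for large $m$. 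Third, the final numerical step ``$\sqrt{1+2m(m-1)}>1.4m+9.6$'' is the worst case $\rho_1=0$, where $k_1^\ast=(1+S)/2$ exactly; for $\rho_1\ge1$ one must account for the floor in $k_1^\ast$, and the crude bound $k_1^\ast\ge(S-1)/2$ is too lossy to reach $740$. One needs either the exact relation $\rho_1=\tfrac12\epsilon(S-\epsilon)$ with $k_1^\ast=\tfrac{1+S}2-\epsilon$ and minimize over admissible $\epsilon$, or simply a direct finite check --- which is also all the paper offers for the constant $740$.
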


\vskip 0.1cm

The main idea of the proofs is that for certain pairs $(m,f)$, there is no graph on $m$ vertices and $f$ edges which is a vertex disjoint union of a clique and a forest or a complement of a vertex disjoint union of a clique and a forest. In order to do so, we need several number theoretic statements that we prove in several lemmas. After that, we use the observation from \cite{EFRS}, that for any sufficiently large $n$, and any $e \leq c \binom{n}{2}$, for any $0\leq c < 1$, there is a graph on $n$ vertices and $e$ edges that is the vertex disjoint union of a clique and a graph of girth greater than $m$. In particular, any $m$-vertex induced subgraph of such a graph is a disjoint union of a clique and a forest. Considering the complements, we deduce that $(m, f)$ is absolutely avoidable. \\

The problem can also be considered in a bipartite setting. It would be interesting to show whether there are absolutely avoidable pairs. Unfortunately we cannot use our method to find such pairs, since any bipartite pair $(m,f)$ with $f\leq m^2/2$ can be represented as the vertex disjoint union of a complete bipartite graph and a forest, see  \Cref{sec:bip}.\\

We state and prove the lemmas in Section \ref{lemmas} and prove the theorems in Section \ref{proofs-theorems}.

\section{Lemmas and number theoretic results}\label{lemmas}

For a positive real number $x$, let $[x]=\{0, 1, \ldots, \lfloor x \rfloor\}$.  We say that a pair $(m,f)$  is {\it realizable } by a graph $H=(V,E)$ if $|V(H)|=m$ and  $|E(H)|=f$. 
For two integers $x, y$, $x\leq y$, we denote by $[x,y]$ the set of all integers at least $x$ and at most $y$. 
For two reals $x, y$, $x\leq y$, we use the standard  notation $(x, y), [x, y), (x, y],$ and $[x,y]$ for respective intervals of reals. For $x \in \mathbb R$ let $\{x\} = x - \floor{x}$ denote the fractional part of $x$, i.e. $\{x\} \in [0,1)$  and $\{x\} = x \pmod 1$. A  real valued sequence $(x_n)_{n \in \mathbb N}$ is called \emph{uniformly distributed modulo 1} (we write u.d. mod $1$) if for any pair of real numbers $s, t$ with $0 \le s < t \le 1$ we have 
$$ \lim\limits_{N \to \infty} \frac{|\{n: 1 \le n \le N, \{x_n\} \in [s,t)  \} |}{N} = t-s.$$ 
We will use the following facts:

\begin{lemma}\label{fact}
	\begin{enumerate}
		\item[(a)] The sequence $(x_n) =\alpha n$ is u.d.  mod $1$ for any $\alpha \in \mathbb R \setminus \mathbb Q$.
		\item[(b)] If a real valued sequence $(x_n)$ is u.d. mod 1 and a real valued  sequence $(y_n)$  has  the property $\lim\limits_{n \to \infty} (x_n - y_n) = \beta$, a real constant, then  $(y_m)$ is also u.d. mod $1$.
	\end{enumerate}
\end{lemma}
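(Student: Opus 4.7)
The plan is to prove (a) via Weyl's exponential-sum criterion for uniform distribution, and (b) via the translation invariance of u.d.\ combined with a vanishing-perturbation argument.

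For (a), I would invoke \emph{Weyl's criterion}: a real sequence $(x_n)$ is u.d.\ mod $1$ if and only if for every nonzero integer $h$,
$$\lim_{N\to\infty}\frac{1}{N}\sum_{n=1}^{N} e^{2\pi i h x_n}=0.$$
Specializing to $x_n=\alpha n$ with $\alpha$ irrational gives $e^{2\pi i h \alpha}\neq 1$, so $\sum_{n=1}^{N} e^{2\pi i h \alpha n}$ is a geometric sum whose absolute value is bounded by $2/|e^{2\pi i h \alpha}-1|$ uniformly in $N$. Dividing by $N$ and letting $N\to\infty$ gives the required vanishing. Weyl's criterion itself is a standard fact, proved by uniformly approximating the indicator of an interval from above and below by trigonometric polynomials, so invoking it is harmless.

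For (b), my strategy has two steps. \emph{Step 1.} Observe that u.d.\ mod $1$ is invariant under shifting by a constant: if $(x_n)$ is u.d.\ mod $1$, then so is $(x_n-\beta)$, because rotation by $\beta$ on the circle $\mathbb{R}/\mathbb{Z}$ preserves Lebesgue measure and sends any half-open interval $[s,t)\subseteq[0,1)$ to a measurable set of measure $t-s$ (a union of at most two intervals after reducing mod $1$). \emph{Step 2.} Set $\varepsilon_n := y_n - (x_n-\beta)$ so that $\varepsilon_n \to 0$. For any $\delta>0$ and all sufficiently large $n$, $|\varepsilon_n|<\delta$, so $\{y_n\}\in[s,t)$ forces $\{x_n-\beta\}$ into the cyclic $\delta$-neighborhood of $[s,t)$ (a set of measure $t-s+2\delta$), while $\{x_n-\beta\}\in[s+\delta,t-\delta)$ forces $\{y_n\}\in[s,t)$. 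Applying u.d.\ of $(x_n-\beta)$ to the enlarged and shrunken intervals and letting $\delta\to 0$ sandwiches the density of $\{n:\{y_n\}\in[s,t)\}$ at exactly $t-s$.

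The only mild obstacle is that $x\mapsto\{x\}$ is discontinuous at integers, so the $\delta$-enlargement of $[s,t)$ may wrap around and split into two subintervals of $[0,1)$. This causes no harm because Lebesgue measure is additive on such a decomposition and the definition of u.d.\ extends immediately to finite unions of intervals; this is precisely the calculation that makes Step 1 work, and it is the same observation that absorbs the boundary issue in Step 2.
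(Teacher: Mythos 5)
The paper does not prove this lemma itself; it cites Theorem~1.2 and Example~2.1 of Kuipers--Niederreiter for both parts, so there is no internal proof to compare against. Your argument is correct and is essentially the standard one found in that reference: for (a) the geometric-sum bound $\bigl|\sum_{n=1}^N e^{2\pi i h\alpha n}\bigr|\le 2/|e^{2\pi i h\alpha}-1|$ together with Weyl's criterion, and for (b) the translation invariance of u.d.\ mod~1 combined with a $\delta$-sandwich that handles the finitely many small indices where $|y_n-(x_n-\beta)|\ge\delta$ and the possible wraparound of the enlarged interval. The only point worth making explicit in a written-up version of (b) is the $\limsup$/$\liminf$ bookkeeping: the inclusion arguments give $\limsup_N |\{n\le N:\{y_n\}\in[s,t)\}|/N\le t-s+2\delta$ and $\liminf_N\ge t-s-2\delta$ for each $\delta\in(0,(t-s)/2)$, and letting $\delta\to 0$ forces both to equal $t-s$; you gesture at this with ``sandwiches the density'' but it deserves a line.
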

For proofs of these facts see for example  Theorem 1.2  and Example 2.1 in \cite{KN}.\\

The following lemma is given in  \cite{EFRS}, we include it here for completeness.
\begin{lemma}\label{yes-clique-forest}
Let $p \in \mathbb N$ and $c$ be a constant $0\leq c<1$. Then for $n \in \mathbb N$ sufficiently large and any $e \in [c\binom{n}{2}]$,  there exists  a non-negative integer $k$ and a graph on $n$ vertices and $e$ edges which is the  vertex disjoint union of a clique of size $k$ and a graph on $n-k$ vertices of girth at least $p$. \end{lemma}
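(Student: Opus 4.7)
The strategy is to absorb most of the $e$ edges into a clique and fill the rest with a sparse, high-girth graph. Set $k$ to be the largest non-negative integer with $\binom{k}{2}\le e$ and put $e':= e-\binom{k}{2}$; by maximality of $k$ one has $0\le e'\le k-1$. If we can produce a graph $H$ on $N:=n-k$ vertices with exactly $e'$ edges and girth at least $p$, then the disjoint union of $K_k$ and $H$ has $n$ vertices, $\binom{k}{2}+e'=e$ edges and the required form. From $\binom{k}{2}\le e\le c\binom{n}{2}$ and $c<1$ one easily derives $k\le \sqrt{c}\,n+1$, hence $N\ge (1-\sqrt{c})n-1\to\infty$ as $n\to\infty$, and $e'\le k-1\le \sqrt{c}\,n$; in particular $e'=O(N)$ with a constant depending only on $c$.

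I would build $H$ in two regimes. When $e'\le N-1$ (which holds automatically whenever $c\le 1/4$, since then $k\le n/2$), take $H$ to consist of a single path on $e'+1$ vertices together with $N-e'-1$ isolated vertices; being a forest, $H$ has no cycles and therefore trivially satisfies any girth lower bound. When $e'\ge N$, invoke the classical fact that for any fixed $p$ and every $N$ large enough there exists a graph $G_0$ on $N$ vertices of girth at least $p$ with $\Omega(N^{1+1/(p-1)})$ edges, proved either by the probabilistic deletion argument of Erd\H{o}s or by explicit algebraic constructions. Since $e'=O(N)$ while $|E(G_0)|$ grows super-linearly in $N$, we have $e'\le |E(G_0)|$ for $n$ large, and we may define $H$ by deleting $|E(G_0)|-e'$ edges from $G_0$; as deleting edges can only increase the girth, the resulting $H$ still has girth at least $p$.

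The only delicate point is this second regime, i.e.\ guaranteeing enough edges in a girth-$p$ graph on $N$ vertices when $c$ is close to $1$. This is precisely the classical Erd\H{o}s girth problem, so I would quote it rather than reprove it; the other steps are elementary bookkeeping and require only the trivial estimate $\binom{k}{2}\le c\binom{n}{2}\Rightarrow k\le \sqrt{c}\,n+1$ together with the observation that forests have infinite girth.
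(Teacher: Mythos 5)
Your proof is correct and follows essentially the same strategy as the paper: choose $k$ with $\binom{k}{2}\le e<\binom{k+1}{2}$, observe $k\le\sqrt{c}\,n+1<n$, and place a dense graph of girth at least $p$ on the remaining $n-k$ vertices, deleting edges until the count is exactly $e$. Your two-regime split (using a path/forest when $e'\le N-1$) is harmless but unnecessary, since the high-girth-graph argument already covers that case; the paper handles both uniformly with the single superlinear bound $|E(G')|\ge(n-k)^{1+1/(2p)}\ge k\ge e'$.
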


\begin{proof}
Let $p>0$ be given. We use the fact that for any $v$ large enough there exists a graph of girth $p$ on $v$ vertices with $v^{1+\frac{1}{2p}}$ edges. For a probabilistic proof of this fact see for example Bollob\'as \cite{B}  and for an explicit construction see Lazebnik et al. \cite{LUW}.
Let $n$ be a given sufficiently large integer. Let $e\in  [c\binom{n}{2}]$.
  Let $k$ be a non-negative integer such that $\binom{k}{2}\le e \le \binom{k+1}{2}-1$.  Note that since $e \leq c\binom{n}{2}$, $\binom{k}{2} \leq  c\binom{n}{2}$, thus $k\leq \sqrt{c}n +1\leq c'n$, where $c'$ is a constant, $c'<1$.
We claim that $(n,e)$ could be represented as a vertex disjoint union of a clique on $k$ vertices and a graph of girth at least $p$. 
For that, consider a graph $G'$ on $n-k$ vertices and girth at least $p$ such that $|E(G')| \geq (n-k)^{1+\frac{1}{2p}}$.
Consider $G''$, the vertex disjoint union of $G'$ and $K_k$.  Then $|E(G'')| \geq \binom{k}{2} + (n-k)^{1+\frac{1}{2p}}  \geq \binom{k+1}{2} \geq e$.
Here, the second inequality holds since $(n-k)^{1+ \frac{1}{2p}} \geq k$ for  $k\leq c'n$ and $n$ large enough.
Finally, let $G$ be a subgraph of $G''$ on $e$ edges, obtained from $G''$ by removing some edges of $G'$. Thus, $G$ is the vertex disjoint union of a clique on $k$ vertices and a graph of girth at least $p$. 
\end{proof}

~\\
We shall need two number theoretic lemmas for the proof of the main result. Below the set $M$ is defined as in the introduction.

\begin{lemma}\label{lem:pell}
For any $m \in M$, $m$ is a positive integer congruent to $0$ or $1$ modulo 4, and $\sqrt{2m^2 - 10m  + 9} $ is an odd integer for each $m\in M$.
\end{lemma}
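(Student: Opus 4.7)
The plan is to re-express the defining formula for $M$ as a linear recurrence and reduce the statement to a Pell-type identity. For $s \ge 0$, set
$$\begin{pmatrix} a_s \\ b_s \end{pmatrix} = \begin{pmatrix} 3 & 4 \\ 2 & 3 \end{pmatrix}^s \begin{pmatrix} 3 \\ 1 \end{pmatrix},$$
so that $a_0 = 3$, $b_0 = 1$, and $a_{s+1} = 3a_s + 4b_s$, $b_{s+1} = 2a_s + 3b_s$. Every $m \in M$ is of the form $m_s := (a_s+5)/2$ for some integer $s \ge 2$, so the two things to verify are: (i) $m_s$ is an integer with $m_s \equiv 0$ or $1 \pmod 4$, and (ii) $2m_s^2 - 10m_s + 9$ is the square of an odd integer.

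For (i), I would first observe by an easy parity induction that $a_s$ and $b_s$ are both odd for all $s \ge 0$: the base case is immediate, and the recurrences preserve oddness since $4 b_s$ and $2 a_s$ are even. Reducing the first recurrence modulo $8$ and using that $b_s$ is odd gives $a_{s+1} \equiv 3 a_s + 4 \pmod 8$. Starting from $a_0 \equiv 3 \pmod 8$, this toggles between $3$ and $5$, so $a_s \in \{3,5\} \pmod 8$ for every $s$. In particular, $a_s + 5$ is always even, $m_s$ is an integer, and a direct case split on $a_s \bmod 8$ yields $m_s \equiv 0 \pmod 4$ (when $a_s \equiv 3$) and $m_s \equiv 1 \pmod 4$ (when $a_s \equiv 5$).

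For (ii), the key observation is that the quadratic form $Q(a,b) = a^2 - 2b^2$ is preserved by multiplication by the matrix $\left(\begin{smallmatrix} 3 & 4 \\ 2 & 3\end{smallmatrix}\right)$: expanding, $(3a+4b)^2 - 2(2a+3b)^2 = a^2 - 2b^2$. Since $Q(a_0,b_0) = 9 - 2 = 7$, it follows by induction that $a_s^2 - 2 b_s^2 = 7$ for every $s$. Substituting $a_s = 2m_s - 5$ into this identity and simplifying yields $2m_s^2 - 10 m_s + 9 = b_s^2$. Because $b_s$ is odd, this gives exactly the second conclusion.

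The proof is essentially routine once the correct Pell-like equation is identified; there is no real obstacle beyond recognizing that the set $M$ was built precisely to produce solutions of $a^2 - 2b^2 = 7$ via the automorphism $(a,b)\mapsto(3a+4b,\,2a+3b)$ of the form $Q$. The only step needing a small amount of care is the mod-$8$ bookkeeping that extracts the congruence $m \equiv 0, 1 \pmod 4$, since both parities of $s$ must be handled.
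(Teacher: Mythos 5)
Your proof is correct and follows essentially the same route as the paper's: identify the Pell-type invariant $a_s^2 - 2b_s^2 = 7$ preserved by the recurrence, substitute $a_s = 2m_s - 5$ to get $b_s^2 = 2m_s^2 - 10m_s + 9$, and use parity/mod-$8$ bookkeeping on $a_s$ and $b_s$ to extract the congruence on $m_s$ and the oddness of $b_s$. The only cosmetic difference is that the paper tracks $y_s$ modulo $8$ explicitly, whereas you more economically use a parity induction for $b_s$ and a mod-$8$ argument only for $a_s$.
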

\begin{proof}
Recall that $M= \left\{ \frac12 \left( \begin{pmatrix}1 & 0\end{pmatrix}\cdot
									\begin{pmatrix} 3 & 4 \\ 2 & 3 \end{pmatrix}^s\cdot\begin{pmatrix}
										3 \\ 1 \end{pmatrix}+ 5 \right): s\in \mathbb{N} , s\ge 2
									\right\}.$
We see, that $M$ corresponds to the following recursion:  $(x_0, y_0)=(3,1)$ and for $s\geq 0$
\begin{eqnarray*}
x_{s+1} = 3x_s + 4y_s\\
 y_{s+1} = 2x_s + 3y_s.
 \end{eqnarray*}
 
I.e.,   for $s\geq 0$, $$\begin{pmatrix}x_s\\y_s\end{pmatrix} = \begin{pmatrix} 3 & 4 \\ 2 & 3 \end{pmatrix}^{s}\cdot  \begin{pmatrix}3\\ 1\end{pmatrix}.$$
Indeed, $M=\{(x_s+5)/2 : s\geq 2\}$.\\

From the recursion we see that $x_{2s} \equiv3 \pmod 8 $,  $x_{2s+1}\equiv5\pmod 8$, $y_{4s} = y_{4s+1} \equiv 1 \pmod 8$, and  $y_{4s+2} = y_{4s+3} \equiv 5 \pmod 8$ for $s \in \mathbb N_{0}$.   In particular $y_s$ is an odd integer.  Let $m_s= (x_s+5)/2$, i.e., $M= \{m_s: ~ s\geq 2\}.$
When $s$ is even, $m_s \equiv 0 \pmod 4$, and if $s$ is odd,  $m_s \equiv 1 \pmod 4$. This proves the first statement of the Lemma.\\

Next, we observe that $(x,y)=(x_s,y_s)$ gives an integer solution to the generalized Pell's equation 
	\begin{equation}
		 x^2 - 2y^2 = 7. \tag{$*$}
	\end{equation}
Indeed, $(x, y) = (x_0, y_0)=(3,1)$  satisfies $(*)$.
Assume that   $(x,y)= (x_s, y_s)$ satisfies $(*)$.  Let $(x, y)=(x_{s+1}, y_{s+1})$ and insert it 
 into the left hand side of $(*)$.  Then we have  $$x_{s+1}^2 - 2y_{s+1}^2  = 9 x_s^2 + 24 x_sy_s + 16 y_s^2 - 8 x_s^2 - 24 x_sy_s - 18 y_s^2 = x_s^2 -2y_s^2 = 7.$$ Thus $(x, y) = (x_{s+1}, y_{s+1})$  also satisfies $(*)$. \\
 
Since $(x_s, y_s)$ satisfies ($*$), we have that  $y_s = \sqrt {\frac12(x_s^2 -7)} $.  Then $y_s = \sqrt{\frac12((2m_s-5)^2 -7)} = \sqrt{\frac12(4m_s^2 - 20m_s +18)} = \sqrt{2m_s^2 - 10m_s + 9}$. Since $y_s$ is an odd integer, the second statement of the Lemma follows.
\end{proof}
\vskip 0.5cm

For the next lemmas and theorems we will need the following definitions. Let $m, q\in \mathbb Z$,  $m \ge 5+  2\sqrt{|q|}$.  Let 
\begin{equation}\nonumber
\begin{array}{rlrl}
	y_{q}(m) &=\ \frac{\sqrt{2m^2 - 10m - 8q+ 9}}2,  & ~~ z_{q}(m) &=\ \frac{\sqrt{2m^2 - 2m - 8q+ 1}}2, \\
	&&&\\
	t_{q}(m) &=\ z_{q}(m) - y_{q}(m),  &~~ d_{q}(m) &=\  \frac32 - t_{q}(m),\\
		&&&\\
	L_{q}(m)&=\ \floor{\frac52+y_{q}(m)},  &~~ R_{q}(m) &=\ \floor{\frac12 +z_{q}(m)}.\\
\end{array} 
\end{equation}

Note that since  $m \geq 5 + 2\sqrt{|q|}$, we always have $y_q(m), z_q(m) \in \mathbb R$.\\

\begin{lemma}\label{LR}
 Let $q = q(m), m\in \mathbb Z$,   $m \equiv 0,1 \pmod  4$,  $m\geq 5 + 2\sqrt{|q|}$, and $|q(m)|= O(m)$. 
	\begin{enumerate}
			\item[(a)] We have $t_{q}(m) = \tfrac{2\sqrt 2(1-\frac1m)}{\sqrt{1 - \frac{1}{m} + \tfrac{1-8q}{2m^2}} + \sqrt{1 - \tfrac{5}{m} + \tfrac{9-8q}{2m^2}}}.$ In particular, $\lim_{m\to \infty} d_{q}(m) = \frac{3}{2} - \sqrt{2}$.
			\item[(b)] We have  $L_{q}(m) > R_{q}(m)$ if and only if $\{y_{q}(m)\} \in \left[0,d_{q}(m)\right) \cup \left[\frac12, 1\right)$. In particular, $L_{0}(m)>R_{0}(m)$ if  $m\in M$.
	\end{enumerate}
\end{lemma}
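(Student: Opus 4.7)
For part (a), I would use the classical conjugate identity $z_q(m) - y_q(m) = (z_q(m)^2 - y_q(m)^2)/(z_q(m) + y_q(m))$. The numerator collapses neatly:
$$z_q(m)^2 - y_q(m)^2 = \tfrac{(2m^2 - 2m - 8q + 1) - (2m^2 - 10m - 8q + 9)}{4} = \tfrac{8m-8}{4} = 2(m-1).$$
For the denominator I would factor $m\sqrt{2}$ out of each radicand, rewriting $\sqrt{2m^2 - 10m - 8q + 9} = m\sqrt{2}\,\sqrt{1 - 5/m + (9-8q)/(2m^2)}$ and similarly for $z_q(m)$. Substitution then yields the displayed closed form for $t_q(m)$. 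The limit $d_q(m) \to \tfrac{3}{2} - \sqrt{2}$ follows because the assumption $|q(m)| = O(m)$ forces both radicals in the denominator to tend to $1$ as $m \to \infty$, so the right-hand side tends to $2\sqrt{2}/2 = \sqrt{2}$.

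For part (b), the plan is to write $z_q(m) = y_q(m) + t_q(m)$ and set $\alpha = \{y_q(m)\}$, so that $\lfloor y_q(m)\rfloor$ cancels from both $L_q(m)$ and $R_q(m)$, reducing the claim to the statement
$$L_q(m) - R_q(m) = \bigl\lfloor \tfrac{5}{2} + \alpha \bigr\rfloor - \bigl\lfloor \tfrac{1}{2} + \alpha + t_q(m)\bigr\rfloor.$$
I would then split into the two cases $\alpha \in [0,\tfrac{1}{2})$ and $\alpha \in [\tfrac{1}{2},1)$. In the first case the first floor equals $2$, so $L_q(m) > R_q(m)$ reduces to $\tfrac12 + \alpha + t_q(m) < 2$, equivalently $\alpha < \tfrac{3}{2} - t_q(m) = d_q(m)$. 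In the second case the first floor equals $3$, and the condition becomes $\alpha < \tfrac{5}{2} - t_q(m)$; by part (a), for $m$ sufficiently large one has $t_q(m) < \tfrac{3}{2}$, so $\tfrac{5}{2} - t_q(m) > 1$ and the inequality is automatic on all of $[\tfrac12, 1)$. Taking the union of the two cases gives the asserted equivalence.

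The concluding assertion for $m \in M$ is then immediate from Lemma~\ref{lem:pell}: that lemma tells us $\sqrt{2m^2 - 10m + 9}$ is an odd integer, hence $y_0(m) = \tfrac{1}{2}\sqrt{2m^2 - 10m + 9}$ is a half-integer with $\{y_0(m)\} = \tfrac12 \in [\tfrac12, 1)$, so part (b) yields $L_0(m) > R_0(m)$. The main obstacle is really the bookkeeping in the floor-function case analysis of part (b): one must verify that the $\alpha \in [\tfrac12, 1)$ branch is genuinely unconditional, which depends on the quantitative bound $t_q(m) < \tfrac{3}{2}$ extracted from part (a). Everything else is routine algebra together with a single appeal to the Pell-equation computation of Lemma~\ref{lem:pell}.
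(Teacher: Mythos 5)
Your proposal is correct and follows essentially the same route as the paper: the conjugate identity for $z_q - y_q$ in part (a), a floor-function case split on $\{y_q(m)\} \in [0,\tfrac12)$ versus $[\tfrac12,1)$ in part (b) (the paper tabulates four cases rather than two, but the content is identical), and the appeal to Lemma~\ref{lem:pell} for $m\in M$. The only bound from part (a) used in part (b) is $t_q(m) < \tfrac32$ for large $m$, which you correctly identify as the crux of the second branch.
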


\begin{proof}

We start by proving (a). By definition of $t_{q}(m)$ we have
\begin{align*}
	t_{q}(m) &\quad= &z_{q}(m) - y_{q}(m) \\
	&\quad= & \frac12 \sqrt{2m^2 - 2m - 8q + 1} - \frac12\sqrt{2m^2 - 10m - 8q + 9} \\
	&\quad = & \frac12 \frac{2m^2 - 2m - 8q + 1 - 2m^2 + 10m + 8q - 9}{\sqrt{2m^2 - 2m - 8q + 1} + \sqrt{2m^2 - 10m - 8q + 9}}\\
	&\quad= & \frac{2\sqrt 2(1-\frac1m)}{\sqrt{1 - \frac{1}{m} + \frac{1-8q}{2m^2}} + \sqrt{1 - \frac{5}{m} + \frac{9-8q}{2m^2}}}.
\end{align*}

This also shows that for  $|q| = |q(m)| \in O(m)$,
$\lim_{m\to \infty} d_{q}(m) =    \frac{3}{2}  - \lim\limits_{m \to \infty}t_{q}(m) =  \frac{3}{2} - \sqrt{2}$, which concludes the proof of  (a).\\

Now we can prove part (b). 
From part (a) we have in particular that  $t_{q}(m) = \sqrt 2 + \epsilon_{q}(m)$, where for $m$ sufficiently large $|\epsilon_{q}(m)| < 0.05$, and thus, $t_{q}(m) \in (1, \frac32)$. Thus, $d_{q}(m) = \frac32 -t_{q}(m) \in (0,\frac12)$ for sufficiently large $m$. We compare $L_{q}(m)$ and $R_{q}(m)$ using the expression $x = \lfloor x \rfloor  + \{x\}$:
\begin{eqnarray*} 
L_{q}(m) &= &  \!\floor{\frac52 + y_{q}(m)} \\
&= & 2 + \floor{y_{q}(m)}  + \floor{\frac 12 + \{y_{q}(m)\}} \\ 
& = & 2 + \floor{y_{q}(m)} + \begin{cases}
	0, & \{y_{q}(m)\} \in [0,\frac12) \\ 1, & \{y_{q}(m)\} \in [\frac12,1)
\end{cases},
\end{eqnarray*}

\begin{eqnarray*} 
R_{q}(m)  & =  & \floor{\frac12 + z_{q}(m)} \\
& = &  \floor{\frac12 + y_{q}(m) + t_{q}(m)}\\
& = & \floor{y_{q}(m)}  + \floor{\frac12 + t_{q}(m) + \{y_{q}(m)\}} \\
& = &  \floor{y_{q}(m)}  + \begin{cases}
	1, & t_{q}(m) + \{y_{q}(m)\} \in [1,\frac32) \\
	2, & t_{q}(m) + \{y_{q}(m)\} \in [\frac32,\frac52)
\end{cases}.
\end{eqnarray*} 
Thus 
\begin{align*}
L_{q}(m) - R_{q}(m) &= 2 + \begin{cases}
	0 -1, &  \{y_{q}(m)\} \in [0,\frac12) \text{ and } t_{q}(m) + \{y_{q}(m)\} \in [1,\frac32) \\ 
	0 - 2, &  \{y_{q}(m)\} \in [0,\frac12) \text{ and } t_{q}(m) + \{y_{q}(m)\} \in [\frac32,\frac52)\\
	1-1, &  \{y_{q}(m)\} \in [\frac12,1) \text{ and } t_{q}(m) + \{y_{q}(m)\} \in [1,\frac32)\\
	1-2, &   \{y_{q}(m)\} \in [\frac12,1) \text{ and } t_{q}(m) + \{y_{q}(m)\} \in [\frac32,\frac52)
\end{cases}. 
\end{align*}
So, $L_{q}(m) - R_{q}(m) > 0 $  in all cases except for the second one, i.e.,  if and only if 
\begin{eqnarray*}
 \{y_{q}(m)\} & \in  &[0,1) \setminus \left(\left[0,\tfrac12\right)\cap  \left[\tfrac32- t_{q}(m) ,\tfrac52-t_{q}(m) \right)\right)\\
& =  & \left[\tfrac12, 1\right)  \cup \left([0,1) \setminus \left[d_{q}(m), 1+d_{q}(m)\right)\right)\\
& =  & \left[\tfrac12, 1\right)  \cup [0,d_{q}(m)).
\end{eqnarray*}

Now  let $m \in M$ and consider $ y_{0}(m) = \frac{\sqrt{2m^2-10m + 9}}{2}$. Then by \Cref{lem:pell}, $2y_{0}(m)$ is an odd integer for all $m \in M$, i.e. $\{y_{0}(m)\} = \frac12$. Thus, we have $L_{0}(m) > R_{0}(m)$ for all $m \in M$, which concludes the proof of (b).
\end{proof}

\vskip 0.5cm

\begin{lemma}\label{no-clique-forest-general}
If  $q=q(m) \in \mathbb Z$,  $m \in \mathbb N$, $m \equiv 0,1 \pmod 4$, $m \geq 2 \sqrt{|q|} + 5$,   and  $L_{q}(m) > R_{q}(m)$, then the pair $(m, \binom m2/2 - q)$ cannot be realized as the vertex disjoint union of a clique and a forest.
\end{lemma}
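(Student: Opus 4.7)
The plan is to assume toward a contradiction that some graph $G$ on $m$ vertices with $f = \binom{m}{2}/2 - q$ edges is the vertex disjoint union of a clique $K_k$ and a forest $F$ on $m - k$ vertices, and to extract two bounds on $k$ that contradict $L_q(m) > R_q(m)$.

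First I would count edges: for $k \le m-1$ this yields $\binom{k}{2} \le f \le \binom{k}{2} + m - k - 1$, since a forest on $m-k$ vertices has between $0$ and $m-k-1$ edges. The boundary $k = m$ is ruled out immediately, because it would force $q = -\binom{m}{2}/2$ and thus $2\sqrt{|q|} = \sqrt{m(m-1)} \ge m-1$, violating $m \ge 2\sqrt{|q|} + 5$. The lower inequality is a quadratic in $k$ with positive root $\tfrac12 + z_q(m)$, using $8f + 1 = 4 z_q(m)^2$, so $k \le R_q(m)$. For the upper inequality, rearranging to $k^2 - 3k \ge 2f - 2m + 2$ and using $4y_q(m)^2 = 2m^2 - 10m - 8q + 9$ turns it into $(2k-3)^2 \ge 4 y_q(m)^2 + 8$; for $k \ge 2$ taking the positive square root gives $k \ge \tfrac{3}{2} + \sqrt{y_q(m)^2 + 2} > \tfrac{3}{2} + y_q(m)$, strict because $\sqrt{y_q(m)^2 + 2} > y_q(m)$. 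Since $k$ is an integer, this forces $k \ge \lfloor \tfrac{3}{2} + y_q(m) \rfloor + 1 = \lfloor \tfrac{5}{2} + y_q(m) \rfloor = L_q(m)$.

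The remaining cases $k \in \{0, 1\}$ would both require $f \le m - 1$, equivalently $q \ge (m-1)(m-4)/4$; however the hypothesis $m \ge 2\sqrt{|q|} + 5$ only allows $|q| \le (m-5)^2/4 < (m-1)(m-4)/4$ for $m \ge 5$, a contradiction. Combining everything, any such realization must satisfy $L_q(m) \le k \le R_q(m)$, contradicting the hypothesis $L_q(m) > R_q(m)$.

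The only delicate step is not any inequality but the integer-rounding step in the upper-bound derivation, and this is precisely why $L_q(m)$ is defined as $\lfloor \tfrac{5}{2} + y_q(m) \rfloor$: combined with the strict inequality $\sqrt{y_q(m)^2 + 2} > y_q(m)$ and the identity $\lfloor x + 1 \rfloor = \lfloor x \rfloor + 1$, this floor automatically picks out the correct integer threshold, regardless of the fractional part of $y_q(m)$.
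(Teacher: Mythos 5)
Your proof is correct and follows essentially the same approach as the paper: assume a realization as $K_k \sqcup F$, then derive $k \le R_q(m)$ from $\binom{k}{2}\le f$ and $k\ge L_q(m)$ from the forest edge-count bound, contradicting $L_q(m)>R_q(m)$. The only difference is presentational: you explicitly rule out the degenerate cases $k=m$ and $k\in\{0,1\}$ (which the paper handles only implicitly via its quadratic case split, and in fact glosses over $x=m$, where "strictly fewer than $m-x$ edges" is vacuously false), and you keep the $+8$ inside the square root to obtain strictness, whereas the paper works directly with the strict inequality $\binom{m}{2}/2 - q - \binom{x}{2} < m-x$. Both routes are algebraically equivalent; yours is slightly more careful at the boundaries.
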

\begin{proof}
	Let $f= \binom{m}{2}/2 - q$. Suppose that $(m,f)$ can be realized as the vertex disjoint union of a clique $K$ on $x$ vertices and a forest $F$ on $m-x$ vertices. We shall show that $L_{q}(m) \le R_{q}(m)$. \\

	\textbf{Claim 1:} $ x\ge L_{q}(m)$.\\
The forest $F$  has $f- \binom{x}{2}= \binom{m}{2}/2 - q- \binom{x}{2} $ edges.  Since $F$ has $m-x$ vertices, it contains  strictly less than $m-x$ edges. 
	Thus, we have  $\binom  m2 /2 - q - \binom x2  < m-x. $ 
Solving for $x$ gives
	$$	x > \frac32 + \frac{1}{2} \sqrt{2m^2 - 10m -8q  + 9 } ~~ \mbox{ or }  ~~x < \frac32 - \frac{1}{2} \sqrt{2m^2 - 10m -8q + 9 }.$$
	 Since $m \geq 2 \sqrt{|q|} + 5$, we have $2m^2 - 10m -8q +9 \ge 9$.  The second inequality gives $x< \frac32 -\frac12\sqrt{2m^2-10m -8q+9}$, and thus  $x < 0$, a contradiction. So only the first inequality for $x$ holds and implies that 
	$$	x \ge \floor{\frac{3 + \sqrt{2m^2 - 10m -8q  + 9}}{2}} +1 = L_{q}(m), $$
	which proves Claim 1.\\
	
	\textbf{Claim 2:} $x \le R_{q}(m)$.\\
		The number of edges in the clique $K$ is  at most $f$ and exactly $\binom{x}{2}$. Thus
		$\binom x2  \le f = \binom m2 /2  - q, $
	which implies that 
		$2x(x-1)\le m(m-1) - 4q$. This in turn gives 
		$$x \le  \floor{\frac{1 + \sqrt{2m^2 - 2m -8q+ 1}}{2}} = R_{q}(m),$$
	and  proves Claim 2.\\
Claims 1 and 2 imply that $L_{q}(m)  \le R_{q}(m)$.
\end{proof}

\begin{lemma}\label{lem:inequality-gives-avoidable}
 Let  $q=q(m) \in \mathbb Z$,  $m \in \mathbb N$, $m \equiv 0,1 \pmod 4$, $m \geq 2 \sqrt{|q|} + 5$. If  both $L_{q}(m) > R_{q}(m)$ and $L_{-q}(m) > R_{-q}(m)$, then the pair $(m,f) = (m, \binom m2 /2 - q)$ is absolutely avoidable.
\end{lemma}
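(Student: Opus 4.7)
The plan is to prove absolute avoidability by constructing, for every sufficiently large $n$ and every admissible $e$, a graph on $n$ vertices with $e$ edges whose every $m$-vertex induced subgraph has a structural form forbidden by Lemma~\ref{no-clique-forest-general}. The construction splits into two symmetric cases according to the density of the target graph, and we pass to complements in the dense case.

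First, I would invoke Lemma~\ref{yes-clique-forest} with parameter $p = m+1$ and constant $c = 1/2$ (or any $c$ with $1/2 \le c < 1$). This gives an $n_0$ such that for every $n > n_0$ and every $e \in [\tfrac12\binom{n}{2}]$ there exists a graph $G$ on $n$ vertices and $e$ edges that decomposes as the disjoint union of a clique $K_k$ and a graph of girth at least $m+1$. Any $m$-vertex induced subgraph of $G$ is then the disjoint union of a clique (some subclique of $K_k$) and a forest (since no cycle of length at most $m$ can exist in the high-girth part). By Lemma~\ref{no-clique-forest-general} applied with the given $q$, using the hypothesis $L_q(m) > R_q(m)$, no such graph can have exactly $f = \binom{m}{2}/2 - q$ edges. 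Hence $G$ witnesses $(n,e) \not\to (m,f)$ in this regime.

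For the dense regime $e \in (\tfrac12\binom{n}{2}, \binom{n}{2}]$, set $e' = \binom{n}{2} - e$, so $e' \le \tfrac12\binom{n}{2}$. Apply Lemma~\ref{yes-clique-forest} again to obtain a graph $G'$ on $n$ vertices and $e'$ edges which is a clique plus a graph of girth greater than $m$, and let $G = \overline{G'}$, which has $n$ vertices and $e$ edges. Suppose for contradiction that $G$ has an induced subgraph $H$ on $m$ vertices with $f = \binom{m}{2}/2 - q$ edges. Then $\overline{H}$ is an induced subgraph of $G'$ on $m$ vertices with $\binom{m}{2} - f = \binom{m}{2}/2 + q = \binom{m}{2}/2 - (-q)$ edges. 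By the same argument as before, $\overline{H}$ must be the disjoint union of a clique and a forest. But Lemma~\ref{no-clique-forest-general} applied with parameter $-q$ (whose hypothesis $m \ge 2\sqrt{|-q|}+5$ coincides with the one assumed, and whose inequality $L_{-q}(m) > R_{-q}(m)$ is the second assumption of the present lemma) rules out precisely this possibility. This contradiction shows $(n,e) \not\to (m,f)$ in the dense regime as well.

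Taking $n_0$ to be the maximum of the thresholds produced by the two applications of Lemma~\ref{yes-clique-forest} completes the proof. There is no real obstacle here beyond bookkeeping: the content of the argument is entirely packaged in Lemmas~\ref{yes-clique-forest} and~\ref{no-clique-forest-general}, and the only conceptual point is the self-complementary role played by $q \leftrightarrow -q$, which is exactly why the hypothesis is stated symmetrically.
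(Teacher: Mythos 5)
Your proposal is correct and follows essentially the same route as the paper: represent the sparse range $e \le \tfrac12\binom{n}{2}$ via Lemma~\ref{yes-clique-forest} as a clique plus a high-girth graph, pass to complements for the dense range, and rule out both $(m, \binom m2/2 - q)$ and $(m, \binom m2/2 + q)$ as clique-plus-forest via Lemma~\ref{no-clique-forest-general} applied at $q$ and $-q$. The only cosmetic difference is that the paper phrases the complement step as $f_- = \binom m2 - f_+$ rather than through a contradiction with an induced subgraph $H$; the content is identical.
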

\begin{proof}
	Let $m$ satisfy the condition of the Lemma and let $f_- = \binom{m}{2}/2 - q$ and 
	$f_+ = \binom m2/2 + q$. Then by  \Cref{no-clique-forest-general}, neither $(m,f_+)$ nor $(m, f_-)$ can be represented as the vertex disjoint union of  a clique and a forest. \\

	By \Cref{yes-clique-forest}, for every sufficiently large $n$, and all $e \le \ceil{\binom n2/2}$  we can realize $(n,e)$ as the vertex disjoint union of a clique and a graph of girth  greater than $m$.  Thus, for each $e\in \{0, 1, \ldots, \binom{n}{2}\}$ there is a graph $G$ on $n$ vertices and $e$ edges such that either $G$ or the complement  $\overline{G}$ of $G$ is a vertex disjoint union of a clique and a graph of girth greater than $m$.\\
	
	If $G$ is the vertex disjoint union of a clique and a graph of girth greater than $m$, then any $m$-vertex induced subgraph of $G$ is a vertex disjoint union of a clique and a forest.  Since $(m,f_-)$ can not be represented as a clique and a forest,  we have $G\not \to (m,f_-)$.
	If  $\overline{G}$ is the vertex disjoint union of a clique and a graph of girth greater than $m$, then as above $\overline{G} \not \to (m,  f_+)$.
	Since $f_- = \binom{m}{2}-f_+$,  we have that $G\not\to (m,f_-)$.
	Thus, $(m,f_-)$ is absolutely avoidable. 
\end{proof}
	
\section{Proofs of the Main Theorems}	\label{proofs-theorems}

\begin{proof}[Proof of Theorem \ref{main}]
Let $m\in M$. By \Cref{lem:pell} we have $m \equiv 0,1 \pmod 4$, so $f = \binom{m}{2}/2$ is an integer.  By \Cref{LR}(b) we have $L_0(m) > R_0(m)$. Now we can apply \Cref{lem:inequality-gives-avoidable} with $q=0$. 
Thus, the pair $(m,f)$ is absolutely avoidable.
\end{proof}

 \vskip 0.5cm 

\begin{proof}[Proof of \Cref{thm:fixed_constants}]
	Let $q = q(m) \in \mathbb Z, ~~|q(m)| \in O(m)$, be a monotone function.  	Recall that 	$y_{q}(m) = \frac12 \sqrt{2m^2 - 10m + 9 - 8q}$.  
Let $ a = \lim\limits_{m \to \infty} \frac{q(m)}{m}.$ \\

{\bf Claim 1:} $\lim\limits_{m \to \infty}\left( \frac{m}{\sqrt 2} - y_{q}(m) \right)=  \frac{5}{2\sqrt 2} + \sqrt{2}a ~~ \mbox{ and }~~ \lim\limits_{m \to \infty}\left( \frac{m}{\sqrt 2} - y_{-q}(m) \right)=  \frac{5}{2\sqrt 2} - \sqrt{2} a. $\\

Observe  that 
	\begin{align*}
		\lim\limits_{m \to \infty}	\left(\frac{m}{\sqrt 2} - y_{q}(m)\right) &=   \lim\limits_{m \to \infty} \frac{m}{\sqrt 2}\left(1 - \sqrt{1 - \frac{5}{m} + \frac{9-8q}{2m^2}} \right) \\ 
		&=   \lim\limits_{m \to \infty}   \frac{m}{\sqrt 2}    \frac{ \frac{5}{m} -\frac{9-8q}{2m^2}}{1 + \sqrt{1 + \frac{5}{m} + \frac{9-8q}{2m^2}}}\\
		&=   \frac{5}{2\sqrt 2} + \lim\limits_{m \to \infty}\frac{\sqrt 2 q}{m}\\
		&= \frac{5}{2\sqrt2} + \sqrt 2 a.
		\end{align*}
Doing a similar calculation for $y_{-q}(m)$ proves  Claim 1.\\~\\

{\bf Claim 2:}  $y_{q}(4m)$  and $y_{-q}(4m)$  are  u.d. mod $1$, and in particular, $y_{0}(4m)$ is u.d. mod $1$.\\

Since $\frac1{\sqrt 2} \in \mathbb R \setminus \mathbb Q$, by \Cref{fact}(a) the sequence $(x_{4m}) = (4m)/\sqrt 2$ is u.d. mod $1$. Since we have $\lim\limits_{m \to \infty } (x_{4m} - y_{q}(4m)) = \frac{5+2\sqrt2a}{2\sqrt2} \in \mathbb R$ and 
$\lim\limits_{m \to \infty } ( x_{4m}- y_{-q}(4m)) = \frac{5-2\sqrt2a}{2\sqrt2} \in \mathbb R$,  by \Cref{fact}(b)  $(y_{q}(4m))$  and $(y_{-q}(4m))$  are  also u.d. mod $1$. This proves Claim 2. \\~\\

	Now, to prove the first part of the theorem, from \Cref{lem:inequality-gives-avoidable} it suffices to find infinitely many integers $m$ such that for $q=q(m)$,   $L_{q}(m) > R_{q}(m)$ and $L_{-q}(m) > R_{-q}(m)$. \\
	
	 By Lemma \ref{LR}(a), we have that $\lim_{ m\to \infty} d_{q}(m) = \lim_{ m\to \infty} d_{-q}(m) = 3/2- \sqrt{2}$. 
	Let $m_0$ be large enough so that  for any $m\geq m_0$, $d_{q}(m)$ and $d_{-q}(m)$ are close to these limits, i.e.,  $|d_{q}(m) - (3/2 - \sqrt{2})| < (3/2 - \sqrt{2})/3$ and $|d_{-q}(m) - (3/2 - \sqrt{2})| < (3/2 - \sqrt{2})/3$.\\
	
	 Let $\delta >0$ be a small constant such that $\delta< (3/2 - \sqrt{2})/2$,  $2\delta < 1 - \{ \sqrt{2} a \}$  and  if $\{\sqrt{2} a\} <1/2$, then $\delta < 1/2 - \{\sqrt {2} a\} $.
	 In addition assume that $\delta$ is sufficiently small that for any  $m\geq m_0$,     $\delta< d_{q}(m)/3$, and $\delta < d_{-q}(m)/3$. 
	 Using Claim 1,   define $m_\delta$ to be sufficiently large, so that $m_\delta>m_0$ and for any $m\geq m_\delta$,  $y_{q}(m) - \frac{m}{\sqrt{2}}$  and  $y_{-q}(m) -\frac{m}{\sqrt{2}} $ are  $\delta$-close to the limiting values:

	\begin{align*}
y_{q}(m) &\in \left(\left(\frac{m}{\sqrt{2}} - \frac{5}{2\sqrt{2}} - {\sqrt{2}a}\right) - \delta, \left(\frac{m}{\sqrt{2}} - \frac{5}{2\sqrt{2}} - {\sqrt{2}a}\right)+ \delta\right) \quad \text{ and} \\
	y_{-q}(m) &\in \left(\left(\frac{m}{\sqrt{2}} - \frac{5}{2\sqrt{2}}  +{\sqrt{2}a}\right) - \delta, \left(\frac{m}{\sqrt{2}} - \frac{5}{2\sqrt{2}}  +{\sqrt{2}a}\right)+ \delta \right).
	\end{align*}
	
	We distinguish 2 cases based on the values of $a$:\\

	\textbf{Case 1:} $\{\sqrt2 a\} \in [0, \frac14)\cup [\frac12,\frac34)$, i.e. $\{2\sqrt2 a\} \in [0,\frac12)$.\\

	Since $ \frac {4m}{\sqrt 2} - \frac{5}{2\sqrt 2}$ is a sequence  u.d. mod $1$, there is an infinite set $M_1$ of integers at least $m_\delta$, such that for any $m\in M_1$ 
		$$ \frac{4m}{\sqrt 2} - \frac5{2\sqrt 2} \in (k_{m}+ 1/2 + \{\sqrt2 a\} +\delta, k_{m}+  1/2 + \{\sqrt 2 a\} + 2\delta),$$ for some integer $k_{m}$.  Then we have 
	$$y_{q}(4m) \in \left((1/2+ k_{m}+ \{\sqrt2 a\} +\delta) - {\sqrt{2}a} - \delta, (1/2+ k_{m}+ \{\sqrt2 a\} +\delta) - {\sqrt{2}a}+ \delta\right),$$
	$$y_{-q}(4m) \in \left((1/2+ k_{m}+ \{\sqrt2 a\} +\delta) +{\sqrt{2}a} - \delta, (1/2+k_{m}+ \{\sqrt2 a\} +\delta) +{\sqrt{2}a}+ \delta \right).$$
	This implies that 
	$$\{y_{q}(4m) \}, \{y_{-q}(4m)\} \in \left[1/2, 1\right).$$
From Lemma \ref{LR}(b),  $L_{q}(4m) > R_{q}(4m)$ and $L_{-q}(4m) > R_{-q}(4m)$.  Note that   $f = \binom {4m}2/2- q(4m)$ is an integer. Thus, by \Cref{lem:inequality-gives-avoidable}  the pair $\left(4m, \binom {4m}2/2- q(4m)\right)$ is absolutely avoidable for any $m\in M_1$.
\\

	\textbf{Case 2:} $\{\sqrt2 a\} \in [\frac14, \frac12)\cup [\frac34,1)$, i.e. $\{2\sqrt2 a\} \in [\frac12,1)$.\\

	Since $\frac {4m}{\sqrt 2} - \frac{5}{2\sqrt 2}$ is a sequence  is u.d. mod $1$, there is an  infinite set $M_2$ of integers  at least $m_\delta$, such that for any $m\in M_2$ 

$$\frac{4m}{\sqrt 2} - \frac5{2\sqrt 2} \in (k_{m} + \{\sqrt2 a\} +\delta, k_{m} + \{\sqrt 2 a\} + 2\delta),$$ for some integer $k_{m}$. Then we have 
		\begin{align*}
	y_{q}(4m) &\in \left((k_{m}+ \{\sqrt2 a\} +\delta) - {\sqrt{2}a} - \delta, ( k_{m}+ \{\sqrt2 a\} +\delta) - {\sqrt{2}a}+ \delta\right) \quad \text{ and} \\
	y_{-q}(4m) &\in \left(( k_{m}+ \{\sqrt2 a\} +\delta) +{\sqrt{2}a} - \delta, (k_{m}+ \{\sqrt2 a\} +\delta) +{\sqrt{2}a}+ \delta \right).
			\end{align*} 
	This implies that 
	$$\{y_{q}(4m) \} \in \left[0, 2\delta \right), \{y_{-q}(4m)\} \in \left[1/2, 1\right).$$\\[-0.5cm]

	Recall that  for any $m>m_\delta$, $ \delta< d_{q} (m)/3$. Thus, $\{y_{-q}(4m) \} \in [1/2, 1)$ and 
	$\{y_{q}(4m) \} \in [1/2, 1) \cup \left[0, d_{q}(4m)\right)$. 
From Lemma \ref{LR}(b),  $L_{q}(4m) > R_{q}(4m)$ and $L_{-q}(4m) > R_{-q}(4m)$. Note that  $f = \binom {4m}2/2- q(4m)$ is an integer. Thus, by \Cref{lem:inequality-gives-avoidable} the pair  $\left(4m, \binom {4m}2/2- q(4m)\right)$ is absolutely avoidable for any $m\in M_2$. \\

  This proves the first part of the theorem. \\[0.5cm]

For the second part, let $c= 0.175  < \frac1{4\sqrt 2}$. We shall show  that there is an infinite set $M_0$ of integers such that for any $m\in M_0$ and  for all integers $q \in (-cm ,cm)$,  the pair $(m, \binom{m}{2}/2 - q)$ is absolutely avoidable.  In order to do that, we  shall show that $y_{0}(m)$ does not differ much from $y_{q}(m)$, for chosen values of $m$.\\

Recall that $\lim_{m \rightarrow \infty} d_{q}(m)= 3/2-\sqrt{2} >0$  for any $q\in (-cm, cm)$.  Thus, the interval  $\left[\frac34,\frac34 + d_{q}(m)\right)$ has positive length for any such $q$ and sufficiently large $m$.  By Claim 2 the sequence $y_{0}(4m)$ is u.d. mod $1$, thus there are infinitely many values of $m$ that 
$m\equiv 0 \pmod 4$ and  $\{y_{0}(m)\} \in \left[\frac34,\frac34 + d_{q}(m)\right)$. 
 Now our choice for $m$ will allow us to use Lemmas  \ref{LR}, \ref{no-clique-forest-general} and \ref{lem:inequality-gives-avoidable}.\\
 
 Let  $q \in (-cm, cm)$. It will be easier for us to deal with $y_{q}(m) - y_{0}(m)$ instead of $y_{q}(m)$. Let $s_q(m) = y_{q}(m) - y_{0}(m) $.
 We have  
 \begin{eqnarray*}
 	 \lim\limits_{m \to \infty} s_q(m) &=&  \lim\limits_{m \to \infty} \left(y_{q}(m) - y_{0}(m)\right) \\ 
 	   &= &\lim\limits_{m \to \infty } \frac12 \left(\sqrt{2m^2 - 10m +9-8q} - {\sqrt{2m^2 - 10m +9}}\right) \\
 	   &=& -\sqrt2\lim\limits_{m \to \infty }\frac qm. 
 \end{eqnarray*}

Thus, since $q \in (-cm, cm)$, $c = 0.175 < \frac{1}{4\sqrt2}$, for $m$ sufficiently large we have 
$s_q(m) \in \left(-\frac14, \frac14\right)$.  Since $y_{q}=   s_q(m) + y_{0}(m)$,  and $\{y_{0}(m)\} \in \left[\frac34, \frac34 + d_{q}(m)\right)$,
we have that $\{y_{q}\} = \{s_q(m) + y_{0}(m)\} \in [0, d_{q}(m)) \cup [\frac12,1)$.
\Cref{LR}(b) implies that $L_{q}(m) > R_{q}(m)$ and $L_{-q}(m) > R_{-q}(m)$.
Lemmas  \ref{no-clique-forest-general} and \ref{lem:inequality-gives-avoidable}  then imply that $(m, \binom{m}{2}/2 - q)$ is absolutely avoidable.
\end{proof}

 \vskip 1cm 


\begin{proof}[Proof of \Cref{thm:all_m_are_bad}]
Let $m \ge 740$, $m \equiv 0,1 \pmod 4$.
If $L_{0}(m)>R_{0}(m)$,  by \Cref{lem:inequality-gives-avoidable} $(m, \binom{m}{2}/2)$ is absolutely avoidable, so we assume using Lemma \ref{LR}(b)  that $\{y_0(m)\} \in [d_0(m), \frac12)$.\\

We shall first make some observations about $y_{6m}(m)$ and $y_{-6m}(m)$ by comparing them to $y_{0}(m)$. From the definition we have
$$ y_0(m) = \frac12 \sqrt{2m^2 - 10m + 9}, \quad y_{6m}(m)= \frac12 \sqrt{2m^2 - 58m + 9}, \quad y_{-6m}(m) = \frac12 \sqrt{2m^2 + 38m + 9}.$$
Thus 
$$\lim_{m\to \infty} y_0(m) -y_{6m}(m) = 6\sqrt{2} ~~~ \mbox{ and } ~~ \lim_{m\to \infty} y_0(m) -y_{-6m}(m) = -6\sqrt{2}.$$
By \Cref{LR}(a),  
$$\lim_{m\to \infty} t_0(m) = \lim_{m\to \infty} t_{6m}(m)= \lim_{m\to \infty} t_{-6m}(m) = \sqrt{2}.$$

This implies that  
\begin{eqnarray*}
\lim _{m\to \infty} y_0(m) - y_{6m}(m) - t_{6m}(m) &= &  5\sqrt{2} > 7\\
\lim _{m\to \infty} y_0(m) - y_{6m}(m) +t_{0}(m) & = &  7 \sqrt{2}<10\\
\lim _{m\to \infty} -(y_0(m) - y_{-6m}(m)) +t_{-6m}(m) & = &   7\sqrt{2} <10 \\
\lim _{m\to \infty} -(y_0(m) - y_{-6m}(m)) -t_{0}(m) & = &   5\sqrt{2}> 7.
\end{eqnarray*}

Thus, for sufficiently large $m$ we have 
\begin{eqnarray*}
y_{6m}(m) &< &  y_0(m)  - t_{6m}(m)  -7 \\
y_{6m}(m) & > &y_0 (m)  + t_0(m) - 10\\
y_{-6m}(m) & <  &10  +y_0(m)  - t_{-6m}(m)\\
y_{-6m}(m) & > & 7 +y_0 (m) + t_0(m).
\end{eqnarray*}

Thus, combining these inequalities and recalling that $d_{q}(m) + t_{q}(m)= 3/2$, for any $q$, we have 
$$ y_0(m)  - 8 - \frac12 - d_0(m) < y_{6m}(m)  \le y_0(m) - 8 - \frac12 + d_{6m}(m),$$
$$ y_0(m)  + 8 + \frac12 - d_0(m) < y_{-6m}(m)  \le y_0(m) +8 + \frac12 + d_{-6m}(m).$$

~\\
Recall that  $\{y_0(m)\} \in [d_0(m), \frac12)$.  Recall also that  by \Cref{LR}(a),  $\lim_{m\to \infty} d_q(m) = \frac32-\sqrt 2\approx 0.086$, for $q\in \{0, 6m, -6m\}$.
Then  $\{y_{6m}(m)\} \in [0, d_{6m}(m)) \cup [\frac12, 1)$ and 
 $\{y_{-6m}(m)\} \in [0, d_{-6m}(m)) \cup [\frac12, 1)$. \\
 
This implies by Lemma \ref{LR}(b) that $L_{6m}(m)> R_{6m}(m)$ and $L_{-6m}(m)> R_{-6m}(m)$.
Therefore by Lemma \ref{lem:inequality-gives-avoidable}, the pair $(m, \binom m2/2 -6m)$ is absolutely avoidable.
 
In particular, one can check that all the above inequalities hold for each $m \ge 740$.
\end{proof}

\section{The bipartite setting}\label{sec:bip}
Our entire argument for the existence of absolutely avoidable pairs so far built on the fact that certain pairs $(m,f)$ can not be realized as the disjoint union of a clique and a forest. A similar question can be asked in the bipartite setting: \\

We say a bipartite graph $G$ \emph{bipartite arrows} the pair $(m,f)$, and write $G\overset{bip}{\rightarrow}(m,f)$ if $G$ has an induced subgraph with parts of size $m$ each, contained in the respective parts of $G$, with  exactly $f$ edges. We say that a pair $(n,e)$ of non-negative integers \emph{bipartite arrows} the  pair $(m,f)$, written $(n,e) \overset{bip}{\rightarrow}(m,f)$ if for any bipartite graph $G$  with parts of size $n$ each and with $e$ edges, $G\overset{bip}{\rightarrow}(m,f)$.\\

We call a  pair $(m,f)$ {\it absolutely avoidable in  a bipartite setting}  if there exists $n_0$, such that for each $n \ge n_0$ and for any $e \in \{0,\ldots, n^2\}$, $(n,e) \overset{bip}\nrightarrow (m,f)$. We refer to a complete bipartite graph as a {\it biclique}. We say that a pair $(m,f)$ is {\it bipartite representable} as a graph $H$ if there is a bipartite graph $H$ with $m$ vertices in each part and $f$ edges.
 The following lemma shows that our argument for the existence of such pairs in the non-bipartite case cannot be extended to the bipartite setting. \\

Here, a {\it biclique}  is an induced subgraph of a complete bipartite graph, i.e., could be in particular an empty set or a single vertex.

\begin{lemma}  For any positive integer $m$ and any non-negative integer $f$,  $f \le \floor{\frac{m^2}{2}}$, there is a bipartite graph $H$  with $m$ vertices in each part, $f$ edges, which is the vertex disjoint union of a biclique  and a forest. 
\end{lemma}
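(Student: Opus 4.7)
The plan is to realize the desired graph $H$ as $H = K_{a,b} \sqcup F$, where $K_{a,b}$ is a biclique using $a$ vertices of one side and $b$ of the other, and $F$ is a bipartite forest on the remaining $(m-a) + (m-b)$ vertices. Since a bipartite forest on parts of sizes $p,q \ge 1$ can realize any edge count in $[0, p+q-1]$ (as an initial segment of a spanning tree of $K_{p,q}$), it suffices to produce integers $0 \le a, b \le m-1$ with $ab \le f \le ab + 2m - a - b - 1$. The degenerate case $m = 1$ (only $f = 0$) is handled by the empty graph, so from now on assume $m \ge 2$.

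Fix $a := \ceil{m/2}$, so $m - a = \floor{m/2}$ and $a \le m - 1$. For $b \in \{0, 1, \ldots, m-1\}$ define
$$I_b := \bigl[\,ab,\ ab + 2m - a - b - 1\,\bigr].$$
It then suffices to show $\bigcup_{b=0}^{m-1} I_b \supseteq [0, \floor{m^2/2}]$.

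The key step is that consecutive $I_b$'s chain together with no gap: $I_{b+1}$ starts at $a(b+1)$ and $I_b$ ends at $ab + 2m - a - b - 1$, so the gap equals $(2a + b) - 2m$, which is non-positive exactly when $b \le 2\floor{m/2}$. This condition holds throughout $b \in \{0, \ldots, m-1\}$ in both parities. Hence the union collapses to the single interval $[0, a(m-1) + (m-a)] = [0, m(a+1) - 2a]$. A short parity check gives $m(a+1) - 2a = m^2/2$ when $m$ is even (with $a = m/2$) and $m(a+1) - 2a = (m-1)(m+2)/2 \ge (m^2-1)/2 = \floor{m^2/2}$ when $m$ is odd (with $a = (m+1)/2$), so the union contains $[0, \floor{m^2/2}]$.

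The conclusion is then routine: given $f \in [0, \floor{m^2/2}]$, pick any $b$ with $f \in I_b$, form $K_{a,b}$ to account for $ab$ edges, and adjoin a bipartite forest $F$ with $f - ab \in [0, 2m - a - b - 1]$ edges on the leftover $(m-a, m-b)$ parts, both non-empty since $a, b \le m - 1$. The only real (and still elementary) obstacle is the arithmetic in the odd case, where the chain just barely reaches past $\floor{m^2/2}$; this is the reason for the specific choice $a = \ceil{m/2}$, as taking $a = \floor{m/2}$ for odd $m$ would undershoot the required range.
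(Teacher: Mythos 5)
Your proof is correct and uses the same underlying decomposition as the paper: fix one side of the biclique near $m/2$, vary the other side $b$, and absorb the slack with a bipartite forest on the leftover vertices. The difference is in bookkeeping. The paper sets $x=\floor{m/2}$ and lets $y$ be the largest integer with $xy\le f$, which forces $y$ to range as high as $m+1$ and leads to a three-way case split ($y<m$, $y=m$, $y=m+1$), the last of which requires an ad hoc biclique $K_{k+1,2k-1}$. By instead fixing $a=\ceil{m/2}$ and arguing via the chain of intervals $I_b=[ab,\,ab+2m-a-b-1]$ for $b\in\{0,\dots,m-1\}$, you get a single unified covering argument: the gaps vanish because $b\le 2\floor{m/2}$, and the top endpoint $m(a+1)-2a$ comfortably clears $\floor{m^2/2}$ in both parities. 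This buys a shorter, case-free proof at the cost of one parity check at the end, and your closing remark correctly identifies why $a=\ceil{m/2}$ (rather than $\floor{m/2}$) is the right choice for odd $m$ -- precisely the point the paper's Case~3 has to patch.
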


\begin{proof}
	
	Fix a pair $(m,f)$ with $f \le \floor{\frac{m^2}2}$. 
	Let $x= \floor{\frac{m}2}$ and let $y$ be the largest integer such that $xy\leq f$. In particular 
	$$xy > f- x ~~~\mbox{ and  }~~~ y \leq \floor{\frac{m^2}2}/ \floor{\frac{m}2}.$$
	We shall use the fact that for any non-negative integers $v'$ and $e'$, with $e'<v'$ and for any partition $v'= v'' + v'''$, with $v'', v'''$ positive integers, 
	there is a forest with partite sets of sizes $v''$ and $v'''$ and $e'$ edges. \\

	{\bf Case 1:} $y <m.$\\
	If $y=0$ then $f< \floor{\frac{m}2}$. In this case $(m,f)$ is bipartite representable as a forest. So, assume that $y>0$.
	We shall show that $(m,f)$ is bipartite representable as a vertex disjoint union of $K_{x,y}$ and a forest.   Let $e' = f - xy$, $v'= 2m -x-y$.  We have that 
	$e' \leq x- 1  = \floor{\frac{m}2}-1$. On the other hand, using the upper bound on $y$, we have that $v' \geq 2m - \floor{\frac{m}2} - \left(\floor{\frac{m^2}2}/ \floor{\frac{m}2}\right).$
	Considering the cases when $m$ is even or odd, one can immediately verify that $e'<v'$.
	Since $x+y +v' = 2m$ and $xy + e'= f$, we have that $(m,f)$ is bipartite representable as a vertex-disjoint union of $K_{x,y}$ and a forest on $v'$ vertices and $e'$ edges.
	Note that in this case we needed $y<m$ so that $K_{x,y}$ doesn't span one of the parts completely. \\

	{\bf Case 2:} $y=m$. \\
	In particular, we have that $ f \geq \floor{\frac{m}2} m$. 
	If $m$ is even, we have that $f\geq m^2/2$ and from our original upper bound $f\leq m^2/2$ it follows that  $f=m^2/2$. Thus $(m,f)$ is bipartite representable as $K_{m/2, m}$ and isolated vertices.
	If $m$ is odd, let $m=2k+1$,  $k\geq 1$.  Then $f\leq  \floor{\frac{m^2}2} = 2k^2 +2k$ and $f \geq y \floor{\frac{m}2} = 2k^2+k$. 
	Consider $K_{k+1, 2k-1}$ and let $e' = f - (k+1)(2k-1)$ and $v'= 2m - 3k$. 
	Then $e' \leq 2k^2 + 2k  - (2k^2 +k -1) = k +1 $ and $v' = 4k+2 - 3k = k+2$. Thus $v'>e'$. 
	Therefore $(m,f)$ is bipartite representable as a vertex disjoint union of $K_{k+1, 2k-1}$ and a forest on $v'$ vertices and $e'$ edges.\\
	
	{\bf Case 3:}. $y=m+1$.\\
	This case could happen only if $m$ is odd. Let $m=2k+1$. Then we have $x=k$ and $y=2k+2$ and $f = 2k^2 + 2k$.  We see that $(m,f)$ is bipartite representable by  $K_{2k, k+1}$ and isolated vertices.
\end{proof}

\section{Conclusion}

We showed that there are infinite sets of absolutely avoidable pairs $(m,f)$.  One could further extend our results and provide more absolutely avoidable pairs.\\

	An analogous to  Theorem \ref{thm:all_m_are_bad}  statement holds for $m \equiv 2,3 \pmod 4$, i.e. for any $m\ge m_0$ either $(m, \floor{\binom m2 /2})$ or $(m, \floor{\binom m2/2} - 6m)$ is absolutely avoidable. We omit the proof here but it can be obtained by a very similar method by slightly changing the constants in the calculations.\\

The arguments in the proof of \Cref{thm:all_m_are_bad} should still hold if we deviate from $f_0=\binom{m}{2}/2$ by a small term, as in \Cref{thm:fixed_constants}. The reason here is that this change does not affect the limit computations for $d_{q}(m)$ and $y_{q}(m)$. Thus,  for each large enough $m$, one should be able to obtain a small interval for $f$ so that  each $(m,f)$ is absolutely avoidable.  We cannot hope to do much better though: In infinitely many cases, if $(m,f_0)$ is absolutely avoidable, then already for $(m, f_0 - m)$ or $(m, f_0 + m)$ our method does not give a contradiction. 
The constant $6$ is  the smallest integer for which the argument in the proof of \Cref{thm:all_m_are_bad} works (since $\{6\sqrt 2\}$ is \emph{close} to $\frac12$ while $\{c\sqrt2\}$, $c \in [5]$ is not). We believe that one could show by an argument very similar to that used in the proof, that for sufficiently large $m$, for any constants $a,b$ which satisfy that $\{a\sqrt 2 - b \sqrt 2\}$ is \emph{close enough} to $\frac 12$, we have that either $(m, f_0 -am)$ or $(m, f_0-bm)$ is absolutely avoidable.\\

As mentioned in Section \ref{sec:bip}, the bipartite setting leaves the following: \\

\textbf{Open Question:} Are there any absolutely avoidable pairs $(m,f)$ in the bipartite setting? \\

\textbf{Acknowledgements:} The authors thank Alex Riasanovsky for his careful reading of the manuscript and his suggestions.

\begin{bibdiv} 
	\begin{biblist} 
		
\bib{AK}{article}{
	title={Induced subgraphs with distinct sizes},
	author={Alon, Noga},
	author={Kostochka, Alexandr},
	journal={Random Structures \& Algorithms},
	volume={34},
	number={1},
	pages={45--53},
	year={2009},
	publisher={Wiley Online Library}
}

\bib{ABKS}{article}{
	title={Sizes of induced subgraphs of Ramsey graphs},
	author={Alon, Noga},
	author={Balogh, J{\'o}zsef},
	author={Kostochka, Alexandr},
	author={Samotij, Wojciech},
	journal={Combinatorics, Probability \& Computing},
	volume={18},
	number={4},
	pages={459},
	year={2009}
}

\bib{AKS}{article}{
	title={Induced subgraphs of prescribed size},
	author={Alon, Noga},
	author={Krivelevich, Michael},
	author={Sudakov, Benny},
	journal={Journal of Graph Theory},
	volume={43},
	number={4},
	pages={239--251},
	year={2003},
	publisher={Wiley Online Library}
}

\bib{AB}{article}{
	title={Graphs having small number of sizes on induced k-subgraphs},
	author={Axenovich, Maria},
	author={Balogh, J{\'o}zsef},
	journal={SIAM Journal on Discrete Mathematics},
	volume={21},
	number={1},
	pages={264--272},
	year={2007},
	publisher={SIAM}
}
	
	\bib{B}{book}{
		title={Extremal graph theory},
		author={Bollob{\'a}s, B{\'e}la},
		year={2004},
		publisher={Courier Corporation}
	}

\bib{BS}{article}{
	title={Induced subgraphs of Ramsey graphs with many distinct degrees},
	author={Bukh, Boris},
	author={Sudakov, Benny},
	journal={Journal of Combinatorial Theory, Series B},
	volume={97},
	number={4},
	pages={612--619},
	year={2007},
	publisher={Elsevier}
}

	\bib{EFRS}{article}{
		title={Induced subgraphs of given sizes},
		author={Erd{\H{o}}s, Paul},
		author={F{\"u}redi, Zolt{\'a}n},
		author={Rothschild, Bruce}, 
		author={S{\'o}s, Vera},
		journal={Discrete mathematics},
		volume={200},
		number={1-3},
		pages={61--77},
		year={1999},
		publisher={Elsevier}
	}

\bib{HMZ}{article}{
	title={Improvements on induced subgraphs of given sizes},
	author={He, Jialin}, 
	author={Ma, Jie}, 
	author={Zhao, Lilu},
	journal={arXiv preprint arXiv:2101.03898},
	year={2021}
}

\bib{KN}{book}{
	title={Uniform distribution of sequences},
	author={Kuipers, Lauwerens},
	author={Niederreiter, Harald},
	year={2012},
	publisher={Courier Corporation}
}

\bib{KS1}{article}{
	title={Ramsey graphs induce subgraphs of quadratically many sizes},
	author={Kwan, Matthew},
	author={Sudakov, Benny},
	journal={International Mathematics Research Notices},
	volume={2020},
	number={6},
	pages={1621--1638},
	year={2020},
	publisher={Oxford University Press}
}

\bib{KS2}{article}{
	title={Proof of a conjecture on induced subgraphs of Ramsey graphs},
	author={Kwan, Matthew},
	author={Sudakov, Benny},
	journal={Transactions of the American Mathematical Society},
	volume={372},
	number={8},
	pages={5571--5594},
	year={2019}
}

	\bib{LUW}{article}{
		title={A new series of dense graphs of high girth},
		author={Lazebnik, Felix},
		author={Ustimenko, Vasiliy},
		author={Woldar, Andrew},
		journal={Bulletin of the American mathematical society},
		volume={32},
		number={1},
		pages={73--79},
		year={1995}
	}

\bib{NST}{article}{
	title={Ramsey graphs induce subgraphs of many different sizes},
	author={Narayanan, Bhargav},
	author={Sahasrabudhe, Julian},
	author={Tomon, Istv{\'a}n},
	journal={Combinatorica},
	volume={39},
	number={1},
	pages={215--237},
	year={2019},
	publisher={Springer}
}

\bib{T}{article}{
	title={On an extremal problem in graph theory},
	author={Tur{\'a}n, Paul},
	journal={Matematikai \'es Fizikai Lapok (in Hungarian)},
	volume={48},
	pages={436--452},
	year={1941}
}

\end{biblist} 
\end{bibdiv}

\end{document}